\documentclass[a4paper,12pt]{article}

\usepackage{amsmath, amscd}
\usepackage{amsthm}
\usepackage{amssymb}
\usepackage{cancel}
\usepackage{color}
\usepackage{bbm}
\usepackage{mathtools}
\usepackage{graphicx}
\usepackage{enumerate}
\usepackage{t1enc}
\usepackage[latin2]{inputenc}
\newtheorem {theorem}{Theorem}
\newtheorem {lemma}[theorem]{Lemma}

\newtheorem {proposition}[theorem]{Proposition}

\def \RR {\mathbb R}

\def \ZZ {\mathbb Z}

\def\cA{\mathcal{A}}
\def\cB{\mathcal{B}}
\def\cC{\mathcal{C}}
\def\cD{\mathcal{D}}

\def\cF{\mathcal{F}}

\def\cH{\mathcal{H}}

\def\cK{\mathcal{K}}
\def\cL{\mathcal{L}}

\def\cN{\mathcal{N}}
\def\cO{\mathcal{O}}

\def\c S{\mathcal{S}}

\def\cU{\mathcal{U}}
\def\cV{\mathcal{V}}

\def\vareps{\varepsilon}

\newcommand{\probab}[1]{\ensuremath{\mathbf{P}\big(#1\big)}}
\newcommand{\expect}[1]{\ensuremath{\mathbf{E}\big(#1\big)}}
\newcommand{\var}[1]{\ensuremath{\mathbf{Var}\big(#1\big)}}

\newcommand{\condexpect}[2]{\ensuremath{\mathbf{E}\big(#1\bigm|#2\big)}}

\newcommand{\probabom}[1]{\ensuremath{\mathbf{P}_{\omega}\left(#1\right)}}
\newcommand{\expectom}[1]{\ensuremath{\mathbf{E}_{\omega}\left(#1\right)}}
\newcommand{\varom}[1]{\ensuremath{\mathbf{Var}_{\omega}\left(#1\right)}}

\newcommand{\condprobabom}[2]{\ensuremath{\mathbf{P}_{\omega}\left(#1\bigm|#2\right)}}
\newcommand{\condexpectom}[2]{\ensuremath{\mathbf{E}_{\omega}\left(#1\bigm|#2\right)}}

\newcommand{\ind}[1]{\ensuremath{\mathbbm{1}{(#1)}}}

\def\clap#1{\hbox to 0pt{\hss#1\hss}}

\DeclareMathOperator*{\wlim}{w\!\lim}

\def \Ordo {\cO}
\def\ordo{o}

\def\Ker{\mathrm{Ker}}
\def\Ran{\mathrm{Ran}}
\def\Dom{\mathrm{Dom}}

\renewcommand{\d}{\mathrm d}

\newcommand{\abs}[1]{\ensuremath |{#1}|}
\newcommand{\norm}[1]{\ensuremath \|{#1}\|}
\newcommand{\altnorm}[1]{\ensuremath |\!\Vert{#1}\Vert\!|}
\newcommand{\sprod}[2]{\ensuremath \langle{#1,#2}\rangle}

\def \wt {\widetilde}
\def\ol{\overline}
\def\ul{\underline}

\def \blue{\color{blue}}
\def \green {\color{green}}
\def \red {\color{red}}
\def \black {\color{black}}

\def \yellow {\color{yellow}}
\def \cyan {\color{cyan}}

\title{Central limit theorem for random walk in degenerate divergence-free random environment: $\mathcal H_{-1}$ reloaded with relaxed ellipticity}
\author{B\'alint T\'oth \\ {\tt R\'enyi Institute Budapest}}

\begin{document}

\maketitle

\begin{abstract}

\noindent
This paper  enhances the result of the work Kozma-T\'oth (2017) \cite{kozma-toth-17}. We prove the central limit theorem (in probability w.r.t. the environment) for the displacement of a random walker in divergence-free (or,  doubly stochastic) random environment, with \emph{substantially relaxed ellipticity} assumptions. Integrability of the reciprocal of the symmetric part of the jump rates is only assumed (rather than their boundedness, as in previous works on this type of RWRE). Relaxing ellipticity involves substantial changes in the proof, making it conceptually elementary in the sense that it does not rely on Nash's inequality in any disguise.

\medskip\noindent
{\sc MSC2010: 60F05, 60G99, 60K37}

\medskip\noindent
{\sc Key words and phrases:}
random walk in random environment (RWRE), di\-ver\-gen\-ce-free drift, incompressible flow, central limit theorem 

\end{abstract}

\section{Introduction}
\label{S:Introduction}

In the work  \cite{kozma-toth-17} the weak CLT (that is, in probability with respect to the environment) was established for random walks in doubly stochastic (or, divergence-free) random environments, under the conditions of 
($\imath$) \emph{strict ellipticity} assumed for the symmetric part of the drift field, and, 
($\imath\imath$) 
$\cH_{-1}(\abs{\Delta})$ assumed for the antisymmetric part of the drift field. 
\\
The proof relied on a martingale approximation (a la Kipnis-Varadhan) adapted to the \emph{non-self-adjoint} and \emph{non-sectorial} nature of the problem,  the two substantial technical components being:
\begin{enumerate}[$\circ$]
\item 
A functional analytic statement about the unbounded operator \emph{formally} written as $\abs{L+L^*}^{-1/2}(L-L^*)\abs{L+L^*}^{-1/2}$, where $L$ is the infinitesimal generator of the environment process,  as seen from the position of the moving random walker. 
\item 
A diagonal heat kernel upper bound which follows from Nash's inequality, valid only under the assumed \emph{strict ellipticity}.  
\end{enumerate}
The assumption of strict ellipticity, however, is conceptually restrictive and excludes relevant applications. 
In this paper we relax the strict ellipticity assumption, replacing it by an integrability condition on the reciprocals of the conductances. This can be done only by "de-Nashifying" the proof. On the other hand the functional analytic elements are refined. These changes are also of conceptual importance: the present proof (of a stronger result) does not invoke  a conceptually higher level (than the CLT) element like a local  heat kernel estimate. Altogether, it is conceptually simpler than that in \cite{kozma-toth-17}. (This was already demonstrated in \cite{toth-24} where the result of \cite{kozma-toth-17} was re-proved along a baby version of the arguments in the present paper.)

\smallskip
\noindent
To the best of our knowledge this is the first time  a CLT is proved for a random walk in a \emph{non-reversible} random environment without a strong ellipticity assumption. The integrability conditions imposed on the random jump rates seem to be close to optimal. 

\subsection{Preliminaries}
\label{ss:Preliminaries}

Let $(\Omega, \cF, \pi, (\tau_z:z\in\ZZ^d))$ be a probability space with an ergodic $\ZZ^d$-action. Denote by  ${\cN}:=\{k\in\ZZ^d: \abs{k}=1\}$ the set of unit elements generating $\ZZ^d$ as an additive group. These will serve as the set of elementary steps of a continuous time nearest neighbour random walk on $\ZZ^d$. 

Let $p:\Omega\to[0,\infty)^{\cN}$ satisfy ($\pi$-a.s.) the following \emph{bi-stochasticity} condition
\begin{align}
\label{bistoch}
\sum_{k\in{\cN}}p_k(\omega)
=
\sum_{k\in{\cN}} p_{-k}(\tau_k\omega), 
\end{align}
and $p:\ZZ^d\times\Omega\to [0,\infty)^{{\cN}}$ be its lifting to a random field over $\ZZ^d$,
\begin{align*}
p_k(x,\omega):=p_k(\tau_x\omega).
\end{align*}
(Throughout the paper, measurable functions $f:\Omega\to\RR$ and their lifting to a random field $f:\ZZ^d\times\Omega\to\RR$, defined as $f(x,\omega):=f(\tau_x\omega)$, will be denoted by the same symbol.) 

Given the random field  $p:\ZZ^d\times\Omega\to [0,\infty)^{{\cN}}$, define the continuous-time random walk in random environment (RWRE), $t\mapsto X(t)\in \ZZ^d$ as the Markovian nearest neighbour  random walk with jump rates
\begin{align}
\label{rwre}
\condprobabom{X(t+dt)= x+k}{X(t)=x} = 
p_k(x, \omega)\,dt + \Ordo((dt)^2),
\end{align}
and initial position $X(0)=0$. 

We use the notation $\probabom{\cdot}$, $\expectom{\cdot}$  and $\varom{\cdot}$ for \emph{quenched} probability, expectation and variance. That is: probability, expectation, and variance with respect to the distribution of the random walk $X(t)$, \emph{conditioned on fixed environment $\omega\in\Omega$}. The notation $\probab{\cdot}:=\int_\Omega\probabom{\cdot} {\d}\pi(\omega)$, $\expect{\cdot}:=\int_\Omega\expectom{\cdot} {\d}\pi(\omega)$ and $\var{\cdot}:=\int_\Omega\varom{\cdot} {\d}\pi(\omega) + \int_\Omega\expectom{\cdot}^2 {\d}\pi(\omega) - \expect{\cdot}^2$ will be reserved for \emph{annealed} probability, expectation and variance. That is: probability,  expectation and variance with respect to the random walk trajectory $X(t)$ \emph{and} the environment $\omega$, sampled according to the distribution $\pi$. 

The \emph{environment process} (as seen from the position of the random walker) is, $t\mapsto \eta_t\in\Omega$, defined as 
\begin{align}
\label{envproc}
\eta_t:= \tau_{X_t}\omega. 
\end{align}
This is a pure jump Markov process on the state space $\Omega$. The infinitesimal generator of its Markovian semigroup is 
\begin{align}
\label{inf-gen-op}
Lf(\omega)
&
=
\sum_{k\in{\cN}} p_k(\omega)(f(\tau_k\omega)-f(\omega)).
\end{align}
The linear operator $L$  is well defined for all measurable functions $f:\Omega\to  \RR$, not just a formal expression. 

It is well known (and easy to check, see e.g. \cite{kozlov-85}) that bi-stochasticity \eqref{bistoch} of the jump rates $p$ is equivalent to stationarity (in time) of the a priori distribution $\pi$ of the environment process $t\mapsto \eta_t\in\Omega$. Moreover, under the conditions \eqref{bistoch} and 
{\yellow \eqref{weakell}}  
(see below) spatial ergodicity of $(\Omega, \cF, \pi, (\tau_z:z\in\ZZ^d))$ also implies time-wise ergodicity of the (time-wise) stationary environment process process $t\mapsto \eta_t\in(\Omega, \cF,\pi)$. See \cite{kozlov-85} for details. Hence it follows that under these conditions the random walk $t\mapsto X(t)$ will have stationary and ergodic \emph{annealed} increments. Though, in the annealed setting the walk is obviously not Markovian.

It is convenient to separate the symmetric and antisymmetric parts of the jump rates: 
\begin{align}
\label{jump rates p s b}
&
p_k(\omega)
=
s_k(\omega)+b_k(\omega),
\quad
&&
s_k(\omega)
:=
\frac{p_k(\omega)+p_{-k}(\tau_k\omega)}{2},
\quad
&&
b_k(\omega)
:=
\frac{p_k(\omega)-p_{-k}(\tau_k\omega)}{2},
\end{align}
and note that 
\begin{align}
\label{s b symm}
& 
s_k(\omega)=s_{-k}(\tau_k\omega)
\geq0, 
&&
b_k(\omega)=-b_{-k}(\tau_k\omega), 
\end{align}
and \eqref{bistoch} is equivalent to 
\begin{align}
\label{b is divfree}
\sum_{k\in\cN} b_k(\omega)=0. 
\end{align}
We assume for now  \emph{weak ellipticity} and \emph{finiteness} of the conductances: $\pi$-a.s. for all $k\in\cN$, 
\begin{align}
\label{weakell}
0< 
s_k(\omega)
<\infty.
\end{align}
Later somewhat stronger integrability  conditions  will be imposed (see \eqref{condintegr} further below). 

Accordingly, we decompose the infinitesimal generator into  Hermitian, and anti-Her\-mi\-tian parts (with respect to the stationary measure $\pi$) as 
\[
L=-S+A
\]
where 
\begin{align}
\label{S-A-ops}
&
Sf(\omega)
:=
-
\sum_{k\in{\cN}} 
s_k(\omega)(f(\tau_k\omega)-f(\omega)), 
&&
Af(\omega)
:=
\sum_{k\in{\cN}} 
b_k(\omega)
(f(\tau_k\omega)-f(\omega)). 
\end{align}
The right hand sides of \eqref{inf-gen-op} and \eqref{S-A-ops} make perfect sense for arbitrary measurable functions $f:\Omega\to \RR$. So, disregarding for the time being topological issues in function spaces, the linear operators $L$, $S$, and $A$ are well defined over the space of all (classes of equivalence of) measurable functions $f:\Omega\to\RR$ (w.r.t. the probability measure $\pi$).

The symmetric part of the jump rates (lifted to $\ZZ^d$ as $s_k(x,\omega):=s_k(\tau_x\omega)$),  
\begin{align}
\label{s-cond}
s_{-k}(x+k,\omega)
=
s_k(x,\omega)
>0 
\end{align}
defines a collection positive \emph{random conductances} on the unoriented edges $\{x,x+k\}$. The weak ellipticity condition \eqref{s-cond} means that all edges of the lattice $\ZZ^d$ are passable for the random walker, at least in one direction. Integrability conditions on the conductances $s_k$ and their reciprocals $s_k^{-1}$ will be imposed later (see \eqref{condintegr} below). Actually, it will be convenient to work with the variables 
\begin{align*}
r_k(\omega):=\sqrt{s_k(\omega)}. 
\end{align*}

Regarding the antisymmetric parts of the jump rates (lifted to $\ZZ^d$ as $b_k(x,\omega):=b_k(\tau_x\omega)$), we have 
\begin{align}
\label{b-flow}
b_{-k}(x+k,\omega)
=
-
b_k(x,\omega).
\end{align}
That is, they define a \emph{random flow} (or, a \emph{lattice vector field}) on the oriented edges $(x,x+k)$ of $\ZZ^d$. In terms of these variables the bi-stochasticity \eqref{bistoch} reads as \emph{sourcelessness}, or \emph{divergence-freeness} of the flow: $\pi$-a.s., for all $x\in\ZZ^d$
\begin{align}
\label{divfree}
\sum_{k\in{\cN}} b_k(x, \omega)=0
\end{align}
Obviously, the former dominate the latter: 
\begin{align}
\label{domin}
\abs{b_k(\omega)}\leq s_k(\omega).
\end{align}
The extreme case, when $\pi$-almost-surely, for all $k\in{\cN}$, $s_k(\omega)=\abs{b_k(\omega)}$, 
will be called \emph{totally asymmetric}. In this case every edge of the lattice $\ZZ^d$ is passable in exactly one of the two possible directions.

Throughout this paper we assume that the divergence-free flows/vector fields $b_k$ are given in \emph{divergence form}. That is, there exists $h:\Omega\to\RR^{{\cN}\times{\cN}}$, with the following ($\pi$-a.s.) symmetries
\begin{align}
\label{h-tensor}
h_{k,l}(\omega)
=
-h_{-k,l}(\tau_k\omega)
=
-h_{k,-l}(\tau_l\omega)
=
-h_{l,k}(\omega)
\end{align}
so that 
\begin{align}
\label{b-is-curl-of-h}
b_{k}(\omega)
=
\sum_{l\in{\cN}} h_{k,l}(\omega)
=
\frac12 \sum_{l\in{\cN}} (h_{k,l}(\omega)-h_{k,l}(\tau_{-l}\omega)).
\end{align}
It is straightforward that \eqref{h-tensor} and \eqref{b-is-curl-of-h} imply \eqref{b-flow}. The converse, however, is not true. It is actually a very subtle issue to determine whether a divergence free vector field (as e.g. $b_k(x, \omega)$ given in \eqref{jump rates p s b}) can be written in this so-called divergence-form or not. More on this in the Appendix.

The symmetry conditions \eqref{h-tensor} mean that the lifted field $h:\ZZ^d\times\Omega\to\RR^{{\cN}\times{\cN}}$, $h_{k,l}(x,\omega):= h_{k,l}(\tau_x\omega)$, 
is a \emph{random function of the oriented plaquettes} of $\ZZ^d$, that is,  a stationary (with respect to spatial translations) \emph{stream tensor}. Note that in dimensions $d=2$ and $d=3$, the stream fields $(x,k,l)\mapsto h_{k,l}(x, \omega) := h_{k,l}(\tau_x \omega)$ are identified with a \emph{scalar height function} on $\ZZ^{2*}:=\ZZ^2+(1/2,1/2)$, respectively, a \emph{flow/vector field} on  $\ZZ^{3*}:=\ZZ^3+(1/2,1/2,1/2)$.

In \cite{kozma-toth-17} the weak CLT (that is, in probability with respect to the environment) was established for the diffusively scaled displacement $T^{-1/2}X(T)$, as $T\to\infty$, under the conditions 
\begin{align}
\label{strong ell}
s_k+(s_k)^{-1} \in\cL_\infty
\end{align}
imposed on the conductances and 
\begin{align}
\label{old H-1}
h_{k,l}\in \cL_2
\end{align}
imposed on the stream tensor. Throughout this paper $\cL_p:=\cL_p(\Omega, \pi)$, with $1\leq p\leq \infty$.
Condition \eqref{old H-1} is equivalent to $b_k\in\cH_{-1}(\abs{\Delta})$. That is, the antisymmetric part of the jump rates belong to the $\cH_{-1}$-space of the lattice Laplacian acting on $\cL_2$.

The upper bound in \eqref{strong ell} could be relaxed to $s_k\in\cL_1$ without effort, and without altering the proof in \cite{kozma-toth-17}. 
The lower bound (\emph{strong ellipticity}) is, however, a serious issue. The main goal of this paper is to relax it to an integrability condition imposed on $(s_k)^{-1}$. 

\subsection{Integrability conditions on the jump rates}
\label{ss: Assumptions}

Regarding the {\it symmetric parts / conductances},  we assume that for all $k\in{\cN}$,
\begin{align}
\label{condintegr}
s_k+(s_k)^{-1} \in\cL_1.
\end{align}
Or, equivalently
\begin{align}
\label{r-l2}
&
r_k
\in\cL_2, 
\\[8pt]
\label{rrec-l2}
&
(r_k)^{-1}
\in\cL_2.
\end{align}
Regarding the {\it antisymmetric parts / flows}, note first that due to \eqref{domin} and 
{\red \eqref{r-l2}} 
we also have for all $k\in\cU$
\begin{align*}
b_k\in\cL_1. 
\end{align*}
Beside
{\red \eqref{r-l2}}
and 
{\blue \eqref{rrec-l2}} 
we also assume that for all $k,l\in{\cN}$,
\begin{align}
\label{new H-1}
(r_{l})^{-1} h_{k,l}
\in\cL_2. 
\end{align}
Note that assuming strong ellipticity, $(r_k)^{-1}\in\cL_{\infty}$, \eqref{new H-1} becomes \eqref{old H-1}.  Furthermore, 
{\red \eqref{r-l2}}
\& 
{\green \eqref{new H-1}}
jointly also imply 
\begin{align}
\label{h-l1}
h_{k,l}
\in\cL_1, 
\end{align}

\smallskip
\noindent
{\bf Summarizing:} 
We make the \emph{structural} assumption \eqref{h-tensor}\&\eqref{b-is-curl-of-h} and the \emph{integrability} assumptions 
{\red \eqref{r-l2}}, 
{\blue \eqref{rrec-l2}}, 
{\green \eqref{new H-1}}. 
We will be explicit about exactly which integrability assumption/condition is used in each step of the forthcoming arguments.

\subsection{The CLT}
\label{ss: The CLT}

The local \emph{quenched} drift of the random walk is
\begin{align}
\label{local-drift}
\condexpectom{dX(t)}{X(s): 0\le s\le t} 
= 
\left(\phi(\eta_t) + \psi(\eta_t)\right)\, dt +\ordo(dt).
\end{align}
where $\phi, \psi : \Omega\to\RR^d$ are 
\begin{align}
\label{phi-and-psi}
& 
\phi(\omega):= \sum_{k\in{\cN}} k s_k(\omega),
&&
\psi(\omega):= \sum_{k\in{\cN}} k b_k(\omega).
\end{align}
Due to \eqref{s-cond}, respectively, \eqref{b-flow} we have
\begin{align}
\label{drift fields}
\phi_i(\omega)
=
s_{e_i}(\omega)-s_{e_i}(\tau_{-e_i}\omega),
&&
\psi_i(\omega)
=
b_{e_i}(\omega)+b_{e_i}(\tau_{-e_i}\omega). 
\end{align}
In particular, from 
{\red\eqref{r-l2}} 
we have $\phi\in\cL_1$ and 
\begin{align}
\label{no-symm-drift}
\int_{\Omega} \phi(\omega)\,d\pi(\omega)
=0.
\end{align}
This does not hold a priori for the drift term $\psi$ coming from the  divergence-free antisymmetric part. However, from \eqref{b-is-curl-of-h} 
and 
{\cyan \eqref{h-l1}} 
we have $\psi\in\cL_1$ and 
\begin{align}
\label{no-antisymm-drift}
\int_{\Omega} \psi(\omega)\,d\pi(\omega)
=0.
\end{align}
From ergodicity under $\pi$ of the environment process $t\mapsto\eta_t$, and from 
\eqref{local-drift}, \eqref{no-symm-drift} and \eqref{no-antisymm-drift} the strong law of large numbers for the displacement of the random walker readily follows. 

\begin{proposition}
[Strong Law of Large Numbers]
\label{prop: slln}
\phantom{}
\\
Assuming 
{\red \eqref{r-l2}}
and 
{\cyan \eqref{h-l1}}, 
\begin{align*}
\lim_{t\to\infty} t^{-1} X(t) =0,
\qquad
\mathrm{a.s.}
\end{align*}
\end{proposition}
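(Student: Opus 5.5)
We decompose the displacement into a martingale part and an additive functional of the environment process,
\begin{align*}
X(t)=M(t)+\int_0^t \big(\phi(\eta_s)+\psi(\eta_s)\big)\,ds,
\end{align*}
where $M(t)$ is the compensated jump martingale. More precisely, $M(t)=\sum_{k\in\cN} k\,\big(N_k(t)-\int_0^t p_k(\eta_s)\,ds\big)$, with $N_k(t)$ the number of jumps in direction $k$ up to time $t$. This is exactly the content of \eqref{local-drift}. We then handle the two terms separately, working under the annealed measure with the environment process started from its stationary distribution $\pi$. By the remarks preceding \eqref{jump rates p s b}, this process is stationary and ergodic under \eqref{bistoch} and \eqref{weakell}.

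For the integral term we apply Birkhoff's ergodic theorem to the stationary ergodic process $t\mapsto\eta_t$. By \eqref{r-l2} and \eqref{drift fields} we have $\phi\in\cL_1$ with mean zero \eqref{no-symm-drift}. By \eqref{h-l1} and \eqref{b-is-curl-of-h} we have $\psi\in\cL_1$ with mean zero \eqref{no-antisymm-drift}. Hence $t^{-1}\int_0^t(\phi+\psi)(\eta_s)\,ds\to0$ almost surely. The continuous-time ergodic theorem applies because $s\mapsto(\phi+\psi)(\eta_s)$ is jointly measurable and integrable in stationarity.

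For the martingale term we cannot use $\cL_2$ bounds, since $p_k\le 2s_k\in\cL_1$ only. Instead we use a pathwise argument. For each $k$, the jump counting process $N_k$ has compensator $\int_0^t p_k(\eta_s)\,ds$. By the ergodic theorem, $t^{-1}\int_0^t p_k(\eta_s)\,ds\to\int p_k\,d\pi<\infty$ almost surely, so the compensator grows at most linearly. A time-changed Poisson representation then gives $N_k(t)=\cN_k\big(\int_0^t p_k(\eta_s)\,ds\big)$ with independent unit-rate Poisson processes. This can be taken as the construction of the walk: the jump rates depend on the path only through $\eta$. The ordinary SLLN for Poisson processes yields $\cN_k(u)/u\to1$ as $u\to\infty$.

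To conclude, we split into two cases according to whether the compensator stays bounded. If it diverges, then $N_k(t)-\int_0^t p_k\,ds=o\big(\int_0^t p_k\,ds\big)=o(t)$. If it stays bounded, then $N_k$ is eventually constant and the difference is trivially $o(t)$. Either way $t^{-1}M(t)\to0$ almost surely, and therefore $t^{-1}X(t)\to0$ almost surely. Alternatively, one can bypass the decomposition entirely: write $X(t)=\sum_k k N_k(t)$ and note that $t^{-1}\int_0^t p_k\,ds\to\int p_k\,d\pi$, so $t^{-1}X(t)\to\sum_k k\int p_k\,d\pi=\int(\phi+\psi)\,d\pi=0$.

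The main obstacle is justifying ergodicity of the environment process under only weak ellipticity, together with the time-change representation. Ergodicity is taken from Kozlov as cited. The time-change representation requires non-explosion, which holds because the total jump rate has finite mean in stationarity.
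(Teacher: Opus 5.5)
Your argument is correct and follows the paper's route: decompose $X$ as in \eqref{mart-decomp}, apply the ergodic theorem to the drift integral using \eqref{no-symm-drift} and \eqref{no-antisymm-drift}, and show that the martingale part is $o(t)$. One correction: your claim that $\cL_2$ bounds are unavailable for $M$ is wrong, because the bracket of a compensated counting process is linear in the rates, so $t^{-1}\expect{\abs{M(t)}^2}=\sum_{k\in\cN}\int_\Omega s_k\,d\pi<\infty$ already under \eqref{r-l2} (this is exactly \eqref{M-bound}), and Doob's maximal inequality on dyadic time blocks plus Borel--Cantelli then gives $t^{-1}M(t)\to0$ a.s. Your time-change argument is therefore valid but unnecessary.
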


\noindent
The main result of this paper is:

\begin{theorem}
[Main theorem: CLT]
\label{thm: main}
\phantom{}
\\
Assume the integrability conditions
{\red \eqref{r-l2}}, 
{\blue \eqref{rrec-l2}}, 
{\green \eqref{new H-1}}. 
Then the displacement $t\mapsto X(t)$ of the random walk can be decomposed as 
\begin{align*}
X(t)=Y(t)+Z(t)
\end{align*}
so that the following limits hold as $N\to \infty$. 

\begin{enumerate} [(i)]

\item 
For $\pi$-almost all $\omega\in\Omega$, $t\mapsto Y(t)$ is a nondegenerate square integrable martingale whose increments are stationary and ergodic in the annealed setting. Thus, due to the Martingale Invariance Principle (cf. \cite{mcleish-74}), for $\pi$-almost all $\omega\in\Omega$,  
\begin{align*}
N^{-1/2} Y(N\cdot) \Rightarrow W_\sigma (\cdot), 
\end{align*}
in $D([0,1])$ under the quenched probability measure $\probabom{\dots}$, where $W_\sigma(\cdot)$ is a Brownian motion with finite and non-degenerate covariance $\sigma^2$.

\item 
For any $t\geq0$ and $\delta>0$
\begin{align*}
\probab{N^{-1/2}\abs{Z(Nt)}>\delta} \rightarrow 0.
\end{align*}

\end{enumerate}

\end{theorem}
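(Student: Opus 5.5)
We follow the Kipnis--Varadhan martingale approximation, adapted to the non-reversible, non-sectorial generator $L=-S+A$, and avoid any heat kernel input. Write
\[
X(t)=M(t)+\int_0^t \varphi(\eta_s)\,\d s,
\qquad
\varphi:=\phi+\psi,
\]
where $M$ is the martingale given by the compensated jumps. Its quadratic variation has density $\sum_k p_k(\eta_s)$, which is in $\cL_1$ by \eqref{r-l2} and \eqref{domin}. The task is therefore to approximate the additive functional $\int_0^t\varphi(\eta_s)\d s$ by a martingale plus a remainder that is negligible on the diffusive scale in $\cL_2(\probab{\cdot})$.

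The first step is to show $\varphi\in\cH_{-1}(S)$, meaning $\abs{(f,\varphi)}\le C\,(f,Sf)^{1/2}$ for bounded $f$. For $\phi$ we use \eqref{drift fields} and summation by parts:
\[
(f,\phi_i)=\int s_{e_i}(f-f\circ\tau_{e_i})\,\d\pi ,
\]
and Cauchy--Schwarz with $r_{e_i}\in\cL_2$ from \eqref{r-l2} gives the bound. For $\psi$ we use \eqref{b-is-curl-of-h} and move the difference onto $f$ through the plaquette symmetry \eqref{h-tensor}. This produces integrals of $h_{k,l}\,(f\circ\tau_l-f)$, which we write as
\[
\bigl(r_l^{-1}h_{k,l}\bigr)\cdot\bigl(r_l(f\circ\tau_l-f)\bigr),
\]
and condition \eqref{new H-1} closes the estimate.

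The second step treats the antisymmetric part. We show that the bilinear form $B(f,g):=(f,Ag)$ is bounded on $\cH_1(S)\times\cH_1(S)$, where $\cH_1(S)$ is the completion of bounded functions in the norm $(f,Sf)^{1/2}$. Rewriting $A$ via \eqref{b-is-curl-of-h} and \eqref{h-tensor} as a sum of terms $h_{k,l}\,\nabla_l f\,\nabla_k g$, the naive bound would require $h\in\cL_\infty$, which is not available. So we only aim at the following: $A$ is skew-symmetric, with a dense domain of bounded functions on which it is defined, and $S+A$ generates the contraction semigroup of $\eta$. What we really need is to solve the resolvent equation
\[
(\lambda+S-A)u_\lambda=\varphi
\]
and obtain the key uniform bounds $\lambda\norm{u_\lambda}^2\to0$ and $\sup_\lambda (u_\lambda,Su_\lambda)<\infty$, together with strong $\cH_1(S)$-convergence of $u_\lambda$. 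The first two facts follow from the energy identity
\[
\lambda\norm{u_\lambda}^2+(u_\lambda,Su_\lambda)=(u_\lambda,\varphi)\le C(u_\lambda,Su_\lambda)^{1/2},
\]
since $(u,Au)=0$. This identity must be justified, because $u_\lambda$ is obtained in $\cL_2$ from the semigroup, so one has to check that $u_\lambda$ lies in a domain where the skew-symmetry holds. We plan to do this by truncating $h$ and $s$ to approximate, strongly elliptic and bounded environments, which are covered by \cite{kozma-toth-17}, and then passing to the limit using \eqref{rrec-l2} and \eqref{new H-1}.

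The third step, which we expect to be the main obstacle, is convergence of the $u_\lambda$ in $\cH_1(S)$. Without a bounded $S^{-1/2}AS^{-1/2}$, weak limits are easy, but strong convergence needs a substitute for the graded sector condition. The plan is to identify the weak limit $u$ as a solution of $(S-A)u=\varphi$ in a weak sense. This uses the $\cH_{-1}$ bound on $\varphi$ from the first step and the fact that $B(f,\cdot)$ is continuous on $\cH_1(S)$ for $f$ bounded, which holds since $h\in\cL_1$ by \eqref{h-l1} and $r^{-1}h\in\cL_2$ by \eqref{new H-1}. We then prove the energy identity for the limit,
\[
(u,Su)=(u,\varphi),
\]
by a density argument. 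Combined with
\[
\limsup_{\lambda\to0}\,(u_\lambda,Su_\lambda)=\lim_{\lambda\to0}\,(u_\lambda,\varphi)=(u,\varphi),
\]
this yields norm convergence, and hence strong convergence. This is exactly where the relaxed ellipticity bites, so it needs the refined functional analysis.

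Given these convergences, the rest is standard. Set
\[
Y(t):=M(t)+\lim_{\lambda\to0}\Bigl(u_\lambda(\eta_t)-u_\lambda(\eta_0)-\int_0^t (Lu_\lambda)(\eta_s)\,\d s\Bigr),
\]
where the limit is taken in $\cL_2$ and is a martingale with stationary ergodic annealed increments, and $Z:=X-Y$. The Kipnis--Varadhan variational estimate gives
\[
\expect{\abs{Z(t)}^2}\le C\,t\,\bigl(\lambda\norm{u_\lambda}^2+(u_\lambda-u,S(u_\lambda-u))\bigr)\Big|_{\lambda=1/t}=o(t),
\]
which is (ii) by Chebyshev's inequality. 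For (i) we apply the martingale invariance principle \cite{mcleish-74}. The covariance $\sigma^2$ is finite because the quadratic variation is integrable. It is nondegenerate because $\sigma^2\ge\expect{\text{q.v. of the }S\text{-part}}-(\cdot)$, and the standard lower bound $\sigma^2\ge \bigl(\int s\bigr)^{-1}$-type estimate follows via the harmonic-coordinate/variational characterization.
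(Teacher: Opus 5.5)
There is a genuine gap. Your overall architecture matches the paper's: the decomposition $X(t)=M(t)+\int_0^t\varphi(\eta_s)\,ds$, the $\cH_-$ bounds on $\phi,\psi$ exactly as in Lemma~\ref{lem: H- bounds}, and a Kipnis--Varadhan resolvent approximation. The gap sits precisely at the step you call the main obstacle, which is also the part the paper identifies as its novel contribution.

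Your weak limit $u\in\cH_+$ only satisfies $\sprod{f}{u}_+ +\sprod{Af}{u}=\sprod{f}{\varphi}$ for $f\in\cA$. That is, $u$ lies in the domain of the \emph{adjoint} of $A|_{\cA}$ on $(\cH_+,\norm{\cdot}_+)$. A density argument for $\norm{u}_+^2=\sprod{u}{\varphi}$ would take $f_n\in\cA$ with $f_n\to u$ in $\cH_+$, and must then show $\sprod{Af_n}{u}=\sprod{Af_n}{u-f_n}\to0$. The only available bound is $\norm{Af_n}_-\,\norm{u-f_n}_+$. Since $A$ is unbounded from $\cH_+$ to $\cH_-$, you would need $f_n\to u$ in the graph norm of $A$, i.e.\ $u\in\Dom\big(\ol{A|_{\cA}}\big)$. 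The statement that the closure equals minus the adjoint is exactly essential skew-self-adjointness of $A$ on $\cH_+$. Equivalently, it is essential skew-self-adjointness of $B=S^{-1/2}AS^{-1/2}$ on $\cL_2$, which is the hypothesis of Theorem~\ref{thm: rsc}.

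Without it you get neither the limiting energy identity nor uniqueness of the weak limit. Two subsequential limits differ, after mapping $\cH_+$ isometrically onto $\cL_2$, by an element of $\Ker(I+B^*)$. Hence you get neither strong convergence nor $\lambda\norm{u_\lambda}_2^2\to0$, because the energy identity for $u_\lambda$ only gives boundedness. Saying this ``needs the refined functional analysis'' leaves the theorem unproved.

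The paper supplies this in Proposition~\ref{prop: B-is-skew-self-adjoint}, in three moves:
\begin{enumerate}[(a)]
\item Through the Riesz isometry $\Lambda=R\nabla S^{-1/2}:\cL_2\to\cK_2$, $B$ on the core $\cC=S^{1/2}\cA$ is conjugated to $D=\Pi R^{-1}HR^{-1}\Pi$ on $\cD=\{R\nabla g: g\in\cA\}\subset\cK_2$.
\item The choice of $\Omega_K$, which bounds $r_k^{\pm1}$ and $h_{k,l}$, is what lets $R^{-1}HR^{-1}$ act on $\cD$ without integrability problems.
\item $D^*$ is shown to be skew-symmetric by comparison with the bounded skew operators $(R^K)^{-1}H^K(R^K)^{-1}$ built from the truncations $r^K_k$, $h^K_{k,l}$, so that $D^{**}=-D^*$.
\end{enumerate}
Your step-two plan of truncating to strongly elliptic environments treated by Kozma--T\'oth does not help here, since nothing there is uniform in the ellipticity constant. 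It is also not needed for the energy identity of $u_\lambda$.

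Separately, your nondegeneracy sentence is not an argument, and $(\int s)^{-1}$ is the wrong quantity. The paper obtains nondegeneracy from the independent diffusive lower bound, Proposition~\ref{prop:lower}. That bound is built on a forward-and-backward martingale and uses the harmonic means $\ol s_k=(\int s_k^{-1}\,d\pi)^{-1}$; this is where \eqref{rrec-l2} enters. A variational route would also work once step three is secured. Write $\sigma^2(e)=\int\sum_k s_k(e\cdot k+\nabla_k u)^2\,d\pi$; the $b_k$-part integrates to zero by edge antisymmetry. Cauchy--Schwarz together with $\int\nabla_k u\,d\pi=0$ then gives $\sigma^2(e)\ge\sum_k\ol s_k(e\cdot k)^2$. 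The zero-mean property holds because $\nabla u\in R^{-1}\cK_2\subset\cU_1$ under \eqref{rrec-l2}.
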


\smallskip
\noindent
{\bf Remark.}
We should emphasize that this is much stronger than an annealed limit theorem. The invariance principle for the displacement of the random walk
\begin{align}
\label{IP}
N^{-1/2} X(N\cdot) \Rightarrow W_\sigma (\cdot), 
\qquad 
\text{ as } 
N\to\infty, 
\end{align}
holds \emph{in probability} (and not averaged out) with respect to the the distribution of the environment, $(\Omega, \pi)$. In other words, the invariance principle \eqref{IP} holds $\pi$-a.s. (that is, in a quenched sense) along sequences $N\to\infty$ increasing sufficiently fast. 

\subsection{Examples}
\label{ss: Examples}

We give some concrete constructive examples -- not covered by earlier works -- where the arguments of this paper work and provide a CLT (in the sense of Theorem \ref{thm: main}). 

\subsubsection{Adding stream term to random conductances}
\label{sss: Adding stream term to random conductances}

Let $\wt s_k(x,\omega)=\wt s_k(\tau_x\omega)$, $k\in\cN$, $x\in\ZZ^d$, be random conductances as in \eqref{s-cond} and assume that the integrability conditions \eqref{condintegr} hold for them. It has been known since \cite{demasi-et-al-89} that the random walk among these conductances obeys the CLT as stated in Theorem \ref{thm: main}. It is a natural question to ask how can one modify this random environment \emph{by adding antisymmetric, nonreversible part to the jump rates} so that the nondegenerate CLT remains valid. Here is an answer.

Let $h_{k,l}(x, \omega)=h_{k,l}(\tau_x\omega)$, $k,l\in\cN$, $x\in\ZZ^d$, be a stream tensor, as in \eqref{h-tensor} (defined on the same probability space, jointly with the conductances) and $b_k(x,\omega)= b_k(\tau_x\omega)$, $k\in\cN$, $x\in\ZZ^d$, be the divergence-free flow/vector field given by \eqref{b-is-curl-of-h}.  Let now 
\begin{align*}
p_k(\omega)
:=
\wt s_k(\omega) + 2 b_k(\omega)_+.
\end{align*}
(That is $s_k(\omega):=\wt s_k(\omega) + \abs{b_k(\omega)}$.)
It will suffice to assume (beside \eqref{condintegr} holding for $\wt s_k$) that
\begin{align*}
\frac{\abs{h_{k,l}}}{\sqrt{\wt s_k}}
\in\cL_2.
\end{align*}
In particular, if $\big(\wt s_k(x)\big)_{x\in\ZZ^d, k\in\cN}$ at one hand and $\big(h_{k,l}(x)\big)_{x\in\ZZ^d, k,l\in\cN}$ on the other, are \emph{independent} then it suffices that $h_{k,l}\in\cL_2$. 

If this independence does not hold than, $(\wt s_k)^{-1}\in\cL_\beta$ and $h_{k,l}\in\cL_\alpha$, with $2/\alpha + 1/\beta = 1$ will suffice. $\alpha = \infty$, $\beta=1$ means a perturbation of the random conductance model satisfying \eqref{condintegr} (without strong ellipticity assumed!) with a divergence free drift field which arises as the curl of a \emph{bounded} stream tensor. The case $\alpha=2$, $\beta=\infty$ means divergence free drift field arising as the curl of an $\cL_2$ stream tensor - that is, in $\cH_{-1}(\abs{\Delta})$ of the lattice-Laplacian, and strongly elliptic, fully covered in \cite{kozma-toth-17}. 

\subsubsection{Two totally asymmetric examples}
\label{sss: Totally asymmetric examples}

Let $h_{k,l}(x, \omega)=h_{k,l}(\tau_x\omega)$, $k,l\in\cN$, $x\in\ZZ^d$, be a stream tensor, as in \eqref{h-tensor},  $b_k(x,\omega)= b_k(\tau_x\omega)$, $k\in\cN$, $x\in\ZZ^d$, be the divergence-free flow/vector field given by \eqref{b-is-curl-of-h} and let the jump rates be 
\begin{align*}
p_k(x, \omega)
:=
2 b_k(x, \omega)_+.
\end{align*}
(That is $s_k(\omega):=\abs{b_k(\omega)}$.) Note that each edge $(x,x+k)$ is passable in exactly one of the two directions. It is straightforward that the integrability conditions 
{\red \eqref{r-l2}}, 
{\blue \eqref{rrec-l2}} 
and 
{\green \eqref{new H-1}} 
follow from $h_{k,l}\in\cL_\alpha$ and $\abs{b_k}^{-1}\in\cL_\beta$ with $2/\alpha + 1/\beta = 1$. $\alpha=\infty$, $\beta=1$ means bounded stream tensor (and, as a consequence, bounded jump rates) and integrable reciprocal jump rates (no strong ellipticity). The case $\alpha=2$, $\beta=1$ means strong ellipticity and $\cL_2$ stream tensor - covered in \cite{kozma-toth-17}.  

Another totally asymmetric example is the following non-elliptic \emph{perturbation} (though, not a small perturbation) of elliptic cases covered in \cite{kozma-toth-17}. 
Let $\wt h_{k,l}(x, \omega)=\wt h_{k,l}(\tau_x\omega)$, $k,l\in\cN$, $x\in\ZZ^d$, be a stream tensor, as in \eqref{h-tensor},  $\wt b_k(x,\omega)= \wt b_k(\tau_x\omega)$, $k\in\cN$, $x\in\ZZ^d$, be the corresponding divergence-free flow/vector field given by \eqref{b-is-curl-of-h}. Assume $\wt h_{k,l}\in \cL_2$ and $(\wt b_k)^{-1}\in\cL_\infty$ - that is, the flow/vector field $\wt b_k(x, \omega)$ is assumed to be strongly elliptic. Without loss of generality, assume $\inf \abs{\wt b_k}= 1$ $\pi$-a.s. As concrete examples think about either the $\binom{2d}{d}$-model on $\ZZ^d$, with $d\geq3$,  or the randomly oriented Manhattan lattice on $\ZZ^d$ with $d\geq4$. For details see \cite{kozma-toth-17}, \cite{toth-18b}. In addition, let $(\xi_{k,l}(x))_{x\in\ZZ^d, k,l,\in\cN}$ be i.i.d. random variables modulo the tensorial symmetries in \eqref{h-tensor}, and  also independent of the collection $(\wt h_{k,l}(x))_{x\in\ZZ^d, k,l,\in\cN}$, distributed as $\xi_{kl}(x)\sim {\tt UNI} [-a,+a]$, with the value of $a$ to be specified. Finally, let $h_{k,l}(x)=\wt h_{k,l}(x)+\xi_{k.l}(x)$, $b_k(x)$ given by \eqref{b-is-curl-of-h} and $p_k(x)=2b_k(x)$. If $0\leq a < 1/(2(d-1))$ then the model is still strongly elliptic with $s_k=\abs{b_k}\geq 1-2(d-1)a>0$ and as such it is covered by \cite{kozma-toth-17}. If $a=1/(2(d-1))$ then the model is not elliptic and the integrability conditions 
{\red \eqref{r-l2}}, 
{\blue \eqref{rrec-l2}},  
{\green \eqref{new H-1}} 
hold. If $a>1/(2(d-1))$ then conditions 
{\blue \eqref{rrec-l2}},  
{\green \eqref{new H-1}} 
fail just marginally. 

\subsubsection{RWRE with stationary density}
\label{sss: RWRE with stationary density}

Let $p_k(x,\omega)=p_k(\tau_x\omega)$, $x\in\ZZ^d$, $k\in\cN$, be the jump rates of a random walk in random environment, as in \eqref{rwre}. If the bistochasticity condition \eqref{bistoch} fails to hold that the environment process $t\mapsto \eta_t$ cf \eqref{envproc} is not time-stationary under the a priori measure $\pi$. The equation for a stationary Radon-Nikodym derivative $\varrho\in \cL_{1}$ is, cf \cite{kozlov-85}, 
\begin{align}
\label{R-N eq}
\sum_{k\in\cN} \varrho(\tau_k\omega) p_{-k}(\tau_k\omega)
=
\sum_{k\in\cN} \varrho(\omega) p_{k}(\omega).
\end{align}
It is very natural to expect that solving \eqref{R-N eq} is the first step towards a CLT for the generic RWRE.  This is a very hard equation, there seem to be no general methods to handle it. Few exceptions (when it is solved) are the one-dimensional cases studied in \cite{szasz-toth-84} where a continued fraction expansion is proved to work, the \emph{balanced rwre} case studied  first on \cite{lawler-82} where a hands-on solution is constructed for that particular model, and the \emph{rw in Dirichlet-distributed random environment} studied in \cite{sabot-13} where the solution of equation \eqref{R-N eq} is constructed via deep relations of this particular model with the reinforced random walk problem. The CLT question is settled (positively) in \cite{lawler-82} and \cite{szasz-toth-84}, and left widely open in \cite{sabot-13}, and any other case when \eqref{R-N eq} could possibly be solved. 

Assume that \eqref{R-N eq} is solved with $\varrho\in \cL_1$ and define a new RWRE $t\mapsto \wt X(t)$ with jump rates 
\begin{align}
\label{new-jump-rates}
\wt p_k(\omega):=  \varrho(\omega) p_{k}(\omega). 
\end{align}
Obviously, the jump rates $\wt p_k(x,\omega)$ are bistochastic, as in \eqref{bistoch}. 

The random walks $X(t)$ and $\wt X(t)$ differ only in the holding times between the successive steps: at site $x\in\ZZ^d$ they wait for ${\tt EXP}(\sum_{k\in\cN} p_k(\tau_x\omega))$-, respectively for ${\tt EXP}(\sum_{k\in\cN} \wt p_k(\tau_x\omega))$-distributed holding times and jump to the neighbouring site $x+k$ with the same probabilities $(\sum_{k\in\cN} p_k(x, \omega))^{-1}p_k(x, \omega) = (\sum_{k\in\cN} \wt p_k(x, \omega))^{-1}\wt p_k(x, \omega)$. So, proving the CLT for the walks $X(t)$ and $\wt X(t)$ are related through laws of large numbers for the sums of the holding times in both cases. However, these LLNs hold due to the ergodic theorem. So, it remains to establish the CLT for the walk $\wt X(t)$. That is, to check the structural conditions \eqref{h-tensor}, \eqref{b-is-curl-of-h} and the integrability conditions 
{\red \eqref{r-l2}}, 
{\blue \eqref{rrec-l2}}, 
{\green \eqref{new H-1}} 
for the jump bistochastic rates $\wt p_k(x,\omega)$ defined in \eqref{new-jump-rates}. 
The truly difficult part is checking the structural conditions \eqref{h-tensor}, \eqref{b-is-curl-of-h}. This is work for the future. Given  \eqref{h-tensor}, \eqref{b-is-curl-of-h},  the integrability conditions could be controlled by the appropriate choice of the parameters of the Dirichlet-distributed environment.

\subsection{Related earlier works}
\label{ss: Related earlier works}

Since random walks and/or diffusions in \emph{divergence-free} random drift field are not merely mathematical toys, metaphoric so-called "models", but are directly motivated by the true physical problem of self-diffusion in turbulent and incompressible steady state flow, these problems have a notorious and long history. The historic background in the context of strongly elliptic (cf \eqref{strong ell}) environments was presented in sufficient details in section 1.6 of the survey paper \cite{toth-18b}.  Here I just mention the most important forerunning works - in chronological order - are 
\cite{osada-83},  
\cite{kozlov-85}, 
\cite{oelschlager-88}, 
\cite{fannjiang-papanicolaou-96}, 
\cite{fannjiang-komorowski-97}, 
\cite{komorowski-olla-02}, 
\cite{komorowski-olla-03a}, 
\cite{komorowski-olla-03b}, 
\cite{deuschel-kosters-08}, 
\cite{toth-valko-12}, 
\cite{komorowski-landim-olla-12}, 
\cite{kozma-toth-17}, 
\cite{ledger-toth-valko-18}, 
\cite{toth-18a}. 
The list is by no means exhaustive. Note, however, that in all these works \emph{strong ellipticity} cf \eqref{strong ell} (or, an equivalent condition for the diffusion settings) is assumed. 

As for the non-elliptic setting, much effort has been put on relaxing the strong ellipticity condition \eqref{strong ell} in the context of pure conductance models, where $p_k=s_k$, $b_k=0$. In this setting the CLT of Theorem \ref{thm: main} under conditions \eqref{r-l2} and \eqref{rrec-l2} has been established already in \cite{demasi-et-al-89}. The strongest results in the fully quenched setting (that is, CLT for the random walk in $\pi$-almost all environment)  have been established in \cite{biskup-11} ($d=2$) and \cite{bella-schaffner-20} ($d\geq3$) where the quenched CLT is proved under the \emph{dimension dependent} conditions
\begin{align*}
s_k\in\cL_p, 
\quad
(s_k)^{-1} \in \cL_q, 
\qquad
\frac{1}{p}+ \frac{1}{q} < \frac{2}{d-1}. 
\end{align*}
I have no knowledge of any such result with relaxed ellipticity for the non-reversible cases, where $b_k\not\equiv0$. As seen from \cite{kozma-toth-17}, \cite{toth-18a}, and also from this paper, the extension is by no means straightforward. 

\subsection{Blueprint of the proof and structure of the paper}
\label{ss: Blueprint and structure of the paper}

We follow  the usual route. Based on \eqref{local-drift} we decompose the displacement of the random walker as
\begin{align}
\label{mart-decomp}
X(t)
=
\underbrace{X(t)-\int_0^t (\phi(\eta_s) + \psi(\eta_s))\,ds}_{\displaystyle M(t)}
+
\underbrace{\int_0^t \phi(\eta_s))\,ds}_{\displaystyle I(t)}
+
\underbrace{\int_0^t \psi(\eta_s))\,ds}_{\displaystyle J(t)}.
\end{align}
Here $t\mapsto M(t)$ is a martingale with respect to the quenched measures $\probabom{\cdot}$, with stationary and ergodic annealed (i.e., w.r.t. the measure $\probab{\cdot}$) increments. 

In 
Section \ref{s: Spaces and operators}, 
after some functional analytic preliminaries 
(sections \ref{ss: Basic spaces and operators} and \ref{ss: The infinitesimal generator}) 
we give a detailed exposition of the natural $\cH_{\pm}$-spaces related to the self-adjoint part of the infinitesimal generator (section \ref{ss: "Sobolev spaces"}). 

Section \ref{s: Diffusive bounds}
is devoted to proving annealed diffusive bounds on the displacement $t\mapsto X(t)$ of the random walk. 
We show that under the conditions 
{\red \eqref{r-l2}} and 
{\green \eqref{new H-1}} 
imposed, the drift fields $\phi, \psi:\Omega\to \RR^d$ are (component-wise) in $\cH_{-}$ (section \ref{ss: Upper}). Due to Varadhan's $\cH_{-}$-bound this also implies an annealed  diffusive upper bound on  the last two integrals on the right hand side of \eqref{mart-decomp} and thus, on the displacement (section \ref{ss: Upper}). 
Relying on 
{\blue \eqref{rrec-l2}}
a diffusive lower bound is also established (section \ref{ss: Lower}). This is subtler than corresponding arguments in the random conductance case. 

Theorem \ref{thm: main} is proved in Section \ref{s: Proof of Theorem {main}}. An efficient martingale approximation of the integral terms $t\mapsto I(t)+J(t)$ in \eqref{mart-decomp} is done by applying the non-reversible (non-self-adjoint) variant of the Kipnis-Varadhan theorem, cf \cite{toth-86a}, \cite{horvath-toth-veto-12}, \cite{toth-13}. However, since there is no natural grading of the infinitesimal generator $L$ acting on the Hilbert space $\cL_2$, the Graded Sector Condition cannot be applied. Instead, we employ a stronger version of the  \emph{relaxed sector condition} introduced in \cite{horvath-toth-veto-12} and already used in \cite{kozma-toth-17}. (The general theory -- in a somewhat abstract form -- is summarized in section \ref{ss: Kipnis-Varadhan}.) In this order we have to prove that the (unbounded) operator \emph{formally} defined as $B:=S^{-1/2} A S^{-1/2}$ (where $S:=-(L+L^*)/2$, $A:=-(L-L^*)/2$) makes sense as a  \emph{skew-self-adjoint operator} over $\cL_2$. This is proved in subsection \ref{ss: Checking RSC}, concluding the proof of the main result. This is the novel part of the proof. In \cite{kozma-toth-17} the argument for proving $B^*=-B$ heavily relied on Nash's inequality which, on its turn assumes strong ellipticity \eqref{strong ell} of the jump rates. Relaxing strong ellipticity to the much weaker condition \eqref{weakell} (in particular, 
{\blue \eqref{rrec-l2}}) 
required a new \emph{de-Nashified} proof which turns out to be conceptually  simpler than the one in \cite{kozma-toth-17}. As a bonus we also obtain existence and uniqueness of the 
so-called \emph{harmonic coordinates} (section \ref{ss: Harmonic coordinates - existence and uniqueness}), thus opening the way to a quenched CLT - to be completed in future work. 

In the Appendix we state a Helmholtz-type theorem shedding light on the problem of when and how can a divergence-free flow/vector field expressed in divergence form, as in \eqref{h-tensor}\&\eqref{b-is-curl-of-h}.

\section{Spaces and operators}
\label{s: Spaces and operators}

\subsection{Basic spaces and operators}
\label{ss: Basic spaces and operators}
We define various function spaces (over  $(\Omega, \pi)$) and linear operators acting on them. With usual abuse we denote \emph{classes of equivalence} of $\pi$-a.s. equal measurable functions simply as functions. Let the space of \emph{scalar}-, \emph{vector}-, \emph{rotation-free vector}- and \emph{divergence-free vector}- fields be 
\begin{align*}
\cL
&
:=
\{f:\Omega\to\RR: 
f \text{ is } \cF\text{-measurable}\}
\\[10pt]
\cV
&
:=
\{u:\Omega\to\RR^{{\cN}}: 
u_k\in \cL, \ \ 
u_k(\omega)+u_{-k}(\tau_k\omega)=0, \ \ k\in{\cN}, 
\ \ \pi\text{-a.s.}\}
\\[10pt]
\cU
&
:=
\{u\in\cV: 
u_k(\omega)+u_l(\tau_k\omega)=u_l(\omega)+u_k(\tau_l\omega),  \ \ 
k,l\in{\cN},  \ \ \pi\text{-a.s.}\}.
\end{align*}
These are linear spaces (over $\RR$) with no norm or topology endowed on them  yet.  We call these spaces these names for the obvious reason that their liftings
\begin{align*}
&
f(x,\omega):= f(\tau_x\omega)
\ \ \ 
(f\in\cL)
&&
u(x,\omega):= u(\tau_x\omega)
\ \ \ 
(u\in\cV)
\end{align*}
are  translation-wise ergodic scalar- ,  respectively, vector-fields over $\ZZ^d$. 

The linear operators $\partial_k, R_k, H_{k,l}: \cL\to\cL$, $k,l\in{\cN}$, defined below  on the whole space $\cL$ as their domain, will be the basic primary objects used in constructing more complex operators.
\begin{align}
\label{basic-ops}
\begin{aligned}
& 
T_k f(\omega)
:=
f(\tau_k\omega),
&&
\partial_k 
:=
T_k-I, 
\\[10pt]
&
R_kf(\omega)
:=
r_k(\omega) f(\omega),
\hskip2cm
&& 
H_{k,l}f(\omega)
:=
h_{k,l}(\omega) f(\omega)
\end{aligned}
\end{align}
Using these basic operators we further define
\begin{align}
\notag
&
\nabla:\cL\to\cU,
&&
(\nabla f)_k
:=
\partial_k f
\\[10pt]
\notag
&
\nabla^*:\cV\to \cL, 
&&
\nabla^* u
:= 
\sum_{k\in{\cN}}u_k
=
-
\frac12
\sum_{k\in{\cN}} \partial_{-k} u_k
\\[10pt]
\label{R-op}
&
R:\cV\to\cV,
&&
(Ru)_k
:=
R_ku_k
\\[10pt]
\label{H-op}
&
H:\cV\to\cV,
&&
(Hu)_k
:=
\frac12
\sum_{l\in{\cN}} 
\,H_{k,l}\,
(T_{k} + I ) 
u_l
\\
\notag
& 
&& 
\phantom{(Hu)_k
:}
=
\frac14
\sum_{l\in{\cN}} 
(T_{-l} +  I ) 
\,H_{k,l}\,
(T_{k} + I ) 
u_l
\end{align}
These operators are well defined on the whole spaces given as their respective domains. For the time being the superscript $^*$ is only notation. It will later indicate adjunction with respect to the inner products defined in \eqref{scalar-products} below. On the right hand side of \eqref{H-op} the factors $(T_{-l}+I)/2$ and $(T_k+I)/2$ take care of projecting back to $\cV$. Their necessity and role is a consequence of the spatial discreteness of $\ZZ^d$. 

One can easily check that 
\begin{align}
\label{sym-id}
&
\text{for any } 
v\in \cV: 
&&
\sum_{k\in{\cN}} 
s_k v_k
=
\nabla^* R^2 v, 
\\
\label{skew-sym-id}
&
\text{for any } 
u\in \cU: 
&&
\sum_{k\in{\cN}} 
b_k u_k
=
\nabla^* H u.
\end{align}
The identity \eqref{sym-id} is straightforward. We check \eqref{skew-sym-id}:
\begin{align*}
\big(\nabla^* H u\big)(\omega)
&
=
\frac12
\sum_{k,l\in\cN} h_{k,l}(\omega) \big(u_l(\tau_k\omega)+u_l(\omega)\big)
\\
&
=
\frac14
\sum_{k,l\in\cN} h_{k,l}(\omega) 
\big(u_l(\tau_k\omega)+u_l(\omega)-u_k(\tau_l\omega)-u_k(\omega)\big)
\\
&
=
\frac14
\sum_{k,l\in\cN} h_{k,l}(\omega) 
\big(2u_l(\omega)-2u_k(\omega)\big)
=
\sum_{k,l\in\cN} h_{k,l}(\omega) u_l(\omega)
=
\sum_{l\in\cN} b_{l}(\omega) u_l(\omega)
\end{align*}
where we have used, in turn, the definition \eqref{H-op} of the operator $H$, the skew-symmetry \eqref{h-tensor} of the tensor $h_{k,l}(\omega)$, \emph{the gradient property of $u\in\cU$}, and the skew-symmetry \eqref{h-tensor} again. Note that the identity \eqref{skew-sym-id} holds \emph{only for} $u\in \cU$ and not for $v\in\cV\setminus\cU$. 

To add to the confusion we note here that the spaces 
\begin{align*}
\cK
&
:=
R\cU 
\hskip14pt
= 
\{(r_k u_k)_{k\in\cN}: u\in\cU\}
\hskip9pt
\subset 
\cV
\end{align*}
will also be relevant in the forthcoming arguments. 

Using \eqref{sym-id} and \eqref{skew-sym-id} 
the Hermitian and anti-Hermitian parts of the infinitesimal generator $L$, defined in \eqref{S-A-ops} are written as
\begin{align}
\label{S-A-op-alt}
&
S
=
\nabla^* R^2 \nabla
=
(\nabla^* R) ( R \nabla)
&&
A
=
\nabla^* H \nabla
=
(\nabla^* R) (R^{-1} H R^{-1}) ( R \nabla)
\end{align}

We will work in the following (Riesz-)Lebesgue spaces 
\begin{align*}
\cL_p
&
:=
\{f\in\cL: 
\norm{f}_p^p
:= 
\int_{\Omega}\abs{f}^p\, d\pi<\infty, 
\ \ 
\int_\Omega f\, d\pi=0\}
\\
\cV_p
&
:=
\{u\in\cV:  \ \ 
\norm{u}_p^p:= 
\frac12\sum_{k\in{\cN}} \norm{u_k}_p^p<\infty\} 
\\
\cU_p
&
:=
\{u\in\cU:  \ \ 
\norm{u}_p^p<\infty,  \ \ 
\int_\Omega u\, d\pi=0\} 
\end{align*}
with $p\in\{1,2,\infty\}$, and the usual interpretation for $p=\infty$.  Note, that only centred (zero mean) functions are kept in the spaces $\cL_p$ and $\cU_p$. However, in the space $\cV_p$ this is not the case.  For $(p,q)\in\{(1, \infty), (2,2), (\infty,1)\}$,  $f\in\cL_p$ and $g\in\cL_q$, respectively,  $u\in\cV_p$ and $v\in\cV_q$,  we define the scalar products
\begin{align}
\label{scalar-products}
&
\sprod{f}{g}
:=
\int_\Omega f(\omega)\, g(\omega)\, d\pi(\omega), 
&&
\sprod{u}{v}
:=
\frac12 \sum_{k\in{\cN}} \sprod{u_k}{v_k}. 
\end{align}
We don't introduce  different notation for the norms and scalar products in $\cL_p$, respectively, $\cV_p$. The precise meaning of $\norm{\cdot}_p$ and $\sprod{\cdot}{\cdot}$ will be always clear from the context. 

Finally, let
\begin{align}
\cK_2
:=
(R \cU_\infty)^{{\tt cl}2}
\buildrel 
{\red \eqref{r-l2}} 
\over 
\subset
\cV_2
\label{K2def}
\end{align}
where the superscript $^{{\tt cl}2}$ denotes closure in $(\cV_2, \norm{\cdot}_2)$. We  will denote by $\Pi:\cV_2\to\cK_2$ the orthogonal projection from $\cV_2$ to $\cK_2$, see \eqref{ortproj}. 

The operators $\partial_k:\cL_2\to\cL_2$, $\nabla:\cL_2\to\cV_2$, $\nabla^*:\cV_2\to\cL_2$ are bounded, and their adjointness relations (with respect to the scalar products \eqref{scalar-products}) are  obviously
\begin{align*}
&
\partial_k^{*}=\partial_{-k}
&& 
(\nabla)^*=\nabla^*.
\end{align*}
The operators $R_k, H_{k,l}$ and $R, H$ defined in \eqref{basic-ops}, \eqref{R-op}, and \eqref{H-op}, when restricted to $\cL_2$, respectively, to $\cV_2$, are \emph{unbounded} with respect to the norms $\norm{\cdot}_2$. However, as multiplication operators there is no issue with their proper definition as densely defined self-adjoint, respectively, skew-self-adjoint operators: 
\begin{align*}
&
R_k=R_k^*, 
&&
H_{k,l}^*=H_{k,l}
&&
R=R^*
&&
H^{*}=-H.
\end{align*}

\subsection{The infinitesimal generator - acting on $\cL_2$}
\label{ss: The infinitesimal generator}

The Hermitian and anti-Hermitian parts (w.r.t. the stationary measure $\pi$) of the infinitesimal generator $L=-S+A$, given in \eqref{S-A-ops} (or, equivalently, in \eqref{S-A-op-alt}) are well defined on the whole space $\cL$ of measurable functions. However, their status as (unbounded) linear operators acting on the Hilbert space $\cL_2$ needs clarification. Actually, they are properly defined as (unbounded) self-adjoint, respectively, skew-self-adjoint operators, and moreover, they share a common core of definition. 
For $K<\infty$,  let 
\begin{align}
\label{Omega_K}
\Omega_K
:=
\{
\omega\in\Omega: 
\max_{k,l,m\in\cN}
\{
r_{k}(\omega)^{\pm1}, r_{k}(\tau_m\omega)^{\pm1}, \abs{h_{k,l}(\omega)}, \abs{h_{k,l}(\tau_m\omega)}
\}
\le  K
\}
\end{align}
and 
\begin{align}
\label{common core}
\cA:=
\{f\in\cL_\infty: \
\exists K<\infty: \
\text{supp}\,f \subset \Omega_K\ \ \ \pi\text{-a.s.}\}.
\end{align}
Obviously, $K\mapsto \Omega_{K}$ exhausts increasingly $\Omega$, as $K\to\infty$. Hence, for $p\in\{1,2\}$, $\cA\subset \cL_\infty \subset \cL_p = \cA^{{\tt cl} p}$. It is also straightforward that $S, A: \cA\to \cL_\infty$, and, for $f,g \in \cA$, 
\begin{align*}
&
\sprod{f}{Sg}=\sprod{Sf}{g}, 
&&
\sprod{f}{Sf}> 0,
&&
\sprod{f}{Ag}=-\sprod{Af}{g}. 
\end{align*}

The proof of the following statement is routine which we omit. 

\begin{proposition}
[The infinitesimal generator]
\label{prop: S is sa A is asa}
\phantom{}
\\
Assume 
{\yellow \eqref{weakell}} 
(and no more). 
The linear operators $S$ and $A$ in \eqref{S-A-ops} (or, equivalently, in \eqref{S-A-op-alt})  acting on $\cL_2$ are essentially self-adjoint, respectively,  essentially skew-self-adjoint on their common core $\cA$ defined in  \eqref{common core}. Moreover, $S>0$, as a self-adjoint operator.
\end{proposition}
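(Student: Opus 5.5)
The plan is to use the standard criterion for essential (skew-)self-adjointness of symmetric (skew-symmetric) operators: a densely defined symmetric operator $T$ with domain $\cA$ is essentially self-adjoint iff $\Ran(T\pm i)$ is dense, or, for nonnegative $S$, iff $\Ran(S+\lambda)$ is dense for some $\lambda>0$. Equivalently, $S^*$ and $A^*$ (adjoints of the restrictions to $\cA$) have trivial deficiency spaces. Symmetry, skew-symmetry and $\sprod{f}{Sf}\ge0$ on $\cA$ were already noted above. Density of $\cA$ in $\cL_2$ holds since $\Omega_K\uparrow\Omega$.

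First identify the adjoints. For $g\in\cL_2$ in $\Dom(S^*)$, test against $f=\mathbbm 1_{B}\phi$ with $B\subset\Omega_K$. Using $\tau_k$-invariance of $\pi$ and the symmetry $s_k=s_{-k}\circ\tau_k$, the pairing $\sprod{Sf}{g}$ equals $\sprod{f}{\tilde S g}$, where $\tilde S$ is the pointwise formula \eqref{S-A-ops} applied to the measurable function $g$. On $\Omega_K$ the coefficients are bounded by $K$, and the values $g(\tau_k\omega)$ are square integrable, so everything is finite. Hence $S^*g=\tilde S g$ pointwise, with $\Dom(S^*)=\{g\in\cL_2:\tilde Sg\in\cL_2\}$. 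The same argument works for $A$, using $b_k=-b_{-k}\circ\tau_k$. This step uses only \eqref{weakell}: $\Omega_K$ is defined via $r_k^{\pm1}$ and $h$, but one can equally localize with $s_k$ and $b_k$, and $\abs{b_k}\le s_k$ gives the bounds.

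Next, for $S$: suppose $g\in\cL_2$ with $\tilde Sg=-g$. I want $g=0$. Lifting to $\ZZ^d$, $G(x)=g(\tau_x\omega)$ solves $G(x)+\sum_k s_k(x)(G(x)-G(x+k))=0$. This is a discrete resolvent equation for the reversible conductance walk with conductances $s_k\in(0,\infty)$. Write $G(x)=$ the value of $\mathbf{E}_x\big(e^{-\zeta}G(X_\zeta)\big)$ type martingale, i.e. $e^{t}G(X_t)$ is a local martingale for the conductance walk $X$. Stationarity with $\pi$ invariant and $g\in\cL_2$ gives $\expect{\abs{g(\eta_t)}}\le\norm g_2$, so $e^{t}g(\eta_t)$ can stay a martingale only if $g=0$.

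The main obstacle is the local-martingale-to-martingale passage. The walk is non-explosive a.s. under $\pi$ by stationarity, since the jump rates evaluated along a stationary process do not blow up in finite time. Stop at the exit time $\sigma_n$ from a box. Then $g(0)=\mathbf E(e^{t\wedge\sigma_n}g(\eta_{t\wedge\sigma_n}))$, and uniform integrability is not free. Instead I would take the cleaner Hilbert-space route: cut off $g$ by $\chi_n$ with compact support on the lifted lattice. Concretely, approximate $g$ in the form sense and compute $\sprod{g}{g}+\sum_k\tfrac12\sprod{s_k}{(\partial_kg)^2}=0$ via a Caccioppoli-type localization on $\ZZ^d$ averaged over $\pi$. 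The boundary terms are controlled because they are products of $s_k$ with increments of $g$, and they vanish by the ergodic/stationarity averaging. The identical scheme handles $A\pm i$. There $\sprod{g}{\tilde Ag}$ must be shown purely imaginary, with no real part, forcing $\norm g^2=0$. The boundary terms now involve $b_k$ and are dominated by $s_k$ via \eqref{domin}. Positivity $S>0$ then follows from $\sprod{f}{Sf}=\tfrac12\sum_k\norm{r_k\partial_kf}^2$ on the core, extended by closure. Strictness comes from ergodicity: $Sf=0$ forces $\partial_kf=0$ on $\{s_k>0\}$, which is all of $\Omega$, so $f$ is constant and hence $0$ in the centred space $\cL_2$.
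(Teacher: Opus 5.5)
The paper gives no proof of this proposition (it calls it routine and omits it), so the question is whether your sketch closes the argument. It does not.

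Several parts of your framework are fine:
\begin{itemize}
\item The identification of $S^*$ and $A^*$ as the pointwise operators on $\{g\in\cL_2:\tilde Sg\in\cL_2\}$ is correct, up to an additive constant. This is because the test functions in $\cA$ are centred, so you must either carry the constant or work in non-centred $L_2$.
\item The argument for $\Ker S=\{0\}$ via the quadratic form and ergodicity is fine.
\end{itemize}

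The whole content of the proposition is the Liouville-type step: $g\in\cL_2$ with $\sum_k s_k(T_kg-g)=g$ forces $g=0$. The same is needed for $A^*g=\pm g$; for a skew-symmetric operator the deficiency equations have real eigenvalues $\pm1$, not ``$A\pm i$''. At this step you only assert what is needed, and each supporting claim fails.
\begin{itemize}
\item The local martingale is $e^{-t}G(X_t)$, not $e^{t}G(X_t)$. With your sign no contradiction arises.
\item Non-explosion ``by stationarity'' is circular. Invariance of $\pi$ for the environment process is a consequence of non-explosion, not a reason for it.
\item ``Approximate $g$ in the form sense'' presupposes that the deficiency vector lies in the form domain. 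That is equivalent to the claim you are trying to prove.
\item The boundary terms of your Caccioppoli identity are spatial averages of $s_k(\partial_k\chi_n)^2\abs{G(x)G(x+k)}$. Making them vanish needs something like $s_k\abs{g}\abs{T_kg}\in\cL_1$. This does not follow from $g,\tilde Sg\in\cL_2$ when $s_k$ is not integrable, and \eqref{weakell} does not make it integrable.
\item The bound $\abs{b_k}\le s_k$ is useless for $A$ for the same reason.
\end{itemize}

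This is not a repairable technicality under \eqref{weakell} alone. Suppose the environment process explodes with positive probability, and let $u(\omega):=\expectom{e^{-\zeta}}$, where $\zeta$ is the explosion time. By first-step analysis, $u$ is bounded, non-constant, and satisfies $\sum_k s_k(T_ku-u)=u$ pointwise. Your own adjoint computation then gives $\sprod{(I+S)f}{u-\int u\,d\pi}=0$ for all $f\in\cA$. So $\Ran(I+S|_{\cA})$ is not dense, and $S$ is not essentially self-adjoint on $\cA$.

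Explosion is compatible with $0<s_k<\infty$, and even with $s_k\ge1$. I believe the following construction explodes, based on a heuristic time estimate rather than a full proof. Take a pure conductance field on $\ZZ^2$ built from a $2$-adic hierarchy of lines with a Haar-distributed offset, which makes it stationary and ergodic. Lines of level $j$ have density of order $2^{-j}$ and conductance $2^{j^2}$. The walk climbs to ever sparser and faster lines, and the expected time spent at level $j$ is of order $4^j/2^{j^2}$, which is summable.

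So any correct proof must use more than \eqref{weakell}, and the proposition read literally cannot be proved. You need at least an integrability condition such as $s_k\in\cL_1$, which is in force in the rest of the paper via \eqref{r-l2}. That condition does give non-explosion, since the expected number of jumps by time $t$ is at most $t\sum_k\int s_k\,d\pi$. You then still need a genuine argument for the $\cL_2$-Liouville property, which stationarity averaging alone does not supply.
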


\noindent 
{\bf Remark.}
For the purpose of Proposition \ref{prop: S is sa A is asa}, defining the common core $\cA$ in \eqref{common core}, with the simpler and seemingly more natural choice 
\begin{align*}
\Omega_K
:=
\{
\omega\in\Omega: 
\max_{k,m\in\cN}
\{
r_{k}(\omega), r_{k}(\tau_m\omega)
\}
\le  K
\}
\end{align*}
would suffice. However, later, in section \ref{s: Proof of Theorem {main}} we will need the more restrictive choice \eqref{Omega_K}.

\subsection{$\cH_{+}$, $\cH_{-}$, Riesz operators, isometries}
\label{ss: "Sobolev spaces"}

We collect the basic functional analytic facts about the $\cH_{\pm}$ spaces and Riesz operators related to the self-adjoint part $S$ of the infinitesimal generator. In subsection \ref{sss: Abstract} we collect \emph{textbook material} about the  $\cH_{\pm}$ spaces of any positive operator $S$ over a Hilbert space $\cH$, while in subsection \ref{sss: Concrete} we collect those facts which are valid in our specific setting. 

\subsubsection{Abstract setting}
\label{sss: Abstract}

Let $S=S^*>0$ (note the strict inequality!) be a (possibly unbounded) positive operator acting on an  (abstract) separable Hilbert space $(\cH, \norm{\cdot})$. (In our concrete setting $\cH=\cL_2$ and $S=-(L+L^*)/2$ is the self-adjoint part of the infinitesimal generator $L$, given explicitly in \eqref{S-A-op-alt}.)

Since $S=S^*>0$, the self-adjoint and positive (unbounded) operators $S^{-1}$, $S^{1/2}$ and $S^{-1/2}$ are defined through the Spectral Theorem.  Note that 
$\Dom(S)=\Ran(S^{-1})$, 
$\Ran(S)=\Dom(S^{-1})$, 
$\Dom(S^{1/2})=\Ran(S^{-1/2})$, 
$\Ran(S^{1/2})=\Dom(S^{-1/2})$
are dense subspaces in $(\cH, \norm{\cdot})$ and define the Hilbert spaces
\begin{align}
\label{H+ def}
\cH_{+}
&
:=
\big\{
f\in\Dom(S): \
\norm{f}_+^2
:=
\sprod{f}{S f}
\big\}^{{\tt cl}+}
\\[5pt]
\label{H- def}
\cH_{-}
&
:=
\big\{
f\in\Dom(S^{-1}): \
\norm{f}_-^2
:=
\sprod{f}{S^{-1} f}
\big\}^{{\tt cl}-}
\end{align}
The superscripts $^{{\tt cl}\pm}$ denote closure with respect to the Euclidean norm $\norm{\cdot}_{\pm}$, respectively. Thus, $(\cH_{+}, \norm{\cdot}_+)$ and $(\cH_{-}, \norm{\cdot}_-)$ are complete Hilbert spaces. Obviously, if $\norm{S}_{\cH\to\cH}<\infty$ then $\cH_{-}\subset\cH\subset\cH_{+}$, and likewise, if $\norm{S^{-1}}_{\cH\to\cH}<\infty$ then $\cH_{+}\subset\cH\subset\cH_{-}$. However, in most relevant cases $\norm{S}_{\cH\to\cH}=\infty=\norm{S^{-1}}_{\cH\to\cH}$ and neither of the above  subspace-inclusions hold.

Proposition \ref{prop: Hpm general} holds true in full generality, in this abstract setting. These are standard facts available in any textbook on functional analysis, proven with the use of the Spectral Theorem. See, e.g., chapter VIII in \cite{read-simon-vol1}, and/or section 2.2. in \cite{komorowski-landim-olla-12}. The proof of this proposition is textbook material which we omit. The (unbounded) positive operators $S^{1/2}$, $S^{-1/2}$ are defined in terms of the Spectral Theorem. 

\begin{proposition}
[$\cH_{\pm}$ spaces, general abstract setting]
\label{prop: Hpm general} 
\phantom{}

\begin{enumerate} [(i)]

\item 

\begin{align*}
&
\cH_{+}\cap\cH= \Dom(S^{1/2})
\text{ \ \ and for \ }
f\in\cH_{+}\cap\cH, \ \ 
\norm{f}_+=\norm{S^{1/2} f},
\\[5pt]
& 
\cH_{-}\cap\cH=\Ran(S^{1/2})
\text{ \ \ \ and for \ }
f\in\cH_{-}\cap\cH, \ \ 
\norm{f}_-=\norm{S^{-1/2} f}.
\end{align*}

\item 
The following variational formulae hold: for $f\in\cH$
\begin{align}
\notag
&
\norm{f}_{+}^2
=
\sup_{g\in \cH_-\cap\cH}
\big(2\sprod{f}{g} -\norm{g}_-^2\big)
=
\sup_{g\in \Ran(S)}
\big(2\sprod{f}{g} -\sprod{g}{S^{-1}g}\big)
=
\sup_{g\in \Dom(S)}
\sprod{2f-g}{S g}
\\[5pt]
\label{varp-}
&
\norm{f}_{-}^2
=
\sup_{g\in \cH_+\cap\cH}
\big(2\sprod{f}{g} -\norm{g}_+^2\big)
=
\sup_{g\in \Dom(S)}
\big(2\sprod{f}{g} -\sprod{g}{Sg}\big)
=
\sup_{g\in \Ran(S)}
\sprod{2f-g}{S^{-1} g}
\end{align}

\item 
On $\cH_{+}\cap\cH$, respectively, on $\cH_{-}\cap\cH$, define the Euclidean norms
\begin{align*}
& 
\altnorm{\cdot}_{+}^{2}:=\norm{\cdot}_{+}^2+\norm{\cdot}^2, 
&&
\altnorm{\cdot}_{-}^{2}:=\norm{\cdot}_{-}^2+\norm{\cdot}^2.
\end{align*}
Then, 
$(\cH_{+}\cap\cH, \altnorm{\cdot}_{+})$ and $(\cH_{-}\cap\cH, \altnorm{\cdot}_{-})$ are (complete) Hilbert spaces, and 
\begin{align*}
(\cH_{+}\cap\cH , \altnorm{\cdot}_+)
\,\buildrel{\displaystyle S^{1/2}}\over {\displaystyle \longrightarrow}\, 
(\cH_{-}\cap\cH , \altnorm{\cdot}_-)
\, \buildrel{\displaystyle S^{-1/2}}\over {\displaystyle \longrightarrow}\, 
(\cH_{+}\cap\cH , \altnorm{\cdot}_+)
\end{align*}
are isometries between them. 

\item 
The following cycle of isometries holds. 
\begin{align}
\label{a priori cycle}
(\cH, \norm{\cdot}) 
\,\buildrel{\displaystyle S^{-1/2}}\over {\displaystyle \longrightarrow}\, 
(\cH_{+} , \norm{\cdot}_+)
\,\buildrel{\displaystyle S^{1/2}}\over {\displaystyle \longrightarrow}\, 
(\cH, \norm{\cdot}) 
\,\buildrel{\displaystyle S^{1/2}}\over {\displaystyle \longrightarrow}\, 
(\cH_{-} , \norm{\cdot}_-)
\,\buildrel{\displaystyle S^{-1/2}}\over {\displaystyle \longrightarrow}\, 
(\cH, \norm{\cdot}) 
\end{align}

\end{enumerate}

\end{proposition}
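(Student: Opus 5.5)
The plan is to run everything through the Spectral Theorem. I would realize $S$ as multiplication by a positive function. Concretely, take a unitary $U:\cH\to L^2(X,\mu)$ with $USU^{-1}=M_\lambda$, where $\lambda>0$ $\mu$-a.e.; positivity of $\lambda$ is exactly the strict inequality $S>0$, i.e. $\Ker S=\{0\}$. In this picture $S^{\pm1/2}$ and $S^{\pm1}$ become multiplication by $\lambda^{\pm1/2}$ and $\lambda^{\pm1}$ on their maximal domains.

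The key first step is to identify the abstract completions concretely. For $f\in\Dom(S)$ we have $\norm{f}_+^2=\int\lambda\abs{Uf}^2d\mu$. Hence $\cH_+$ is the closure of $U\Dom(S)$ in the weighted space $L^2(X,\lambda\,d\mu)$. Since $U\Dom(S)$ contains all bounded functions supported on sets $\{\epsilon<\lambda<K\}$, and these are dense in $L^2(\lambda\,d\mu)$, we get $\cH_+\cong L^2(X,\lambda\,d\mu)$. Likewise $\cH_-\cong L^2(X,\lambda^{-1}d\mu)$, using $\Dom(S^{-1})=\{g:\lambda^{-1}g\in L^2\}$. Both sit inside the measurable functions on $X$ together with $\cH\cong L^2(\mu)$, so the intersections $\cH_\pm\cap\cH$ make sense. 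Making this embedding honest is the one point I expect to need care, and it is the main obstacle. The abstract completion must be shown to inject into measurable functions, i.e. no nonzero Cauchy class collapses. This holds because $\lambda>0$ a.e.: an $L^2(\lambda\,d\mu)$-limit that is zero must vanish a.e.

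With this identification, the remaining items are short:
\begin{enumerate}[(i)]
\item We have $L^2(\lambda\,d\mu)\cap L^2(\mu)=\{g:\lambda^{1/2}g\in L^2(\mu)\}=\Dom(S^{1/2})$, and $\norm{f}_+=\norm{\lambda^{1/2}Uf}=\norm{S^{1/2}f}$. Similarly $\cH_-\cap\cH=\Dom(S^{-1/2})=\Ran(S^{1/2})$, with $\norm{f}_-=\norm{S^{-1/2}f}$.
\item The variational formulae follow pointwise from $2\Re(a\bar b)-\lambda\abs{b}^2\le\lambda^{-1}\abs a^2$, with equality at $b=\lambda^{-1}a$. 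Integrating gives the upper bound. The supremum is attained in the limit by choosing $g=U^{-1}(\lambda^{-1}Uf\,\mathbbm 1_{\{\epsilon<\lambda<K\}})\in\Dom(S)$ and letting $\epsilon\to0$, $K\to\infty$ by monotone convergence; this also covers the case where the value is $+\infty$. The other forms arise from the substitution $g\mapsto Sg$ and by symmetry between $\lambda$ and $\lambda^{-1}$.
\item The norm $\altnorm{\cdot}_\pm$ is the $L^2$ norm for the measure $(1+\lambda^{\pm1})d\mu$, hence complete. Multiplication by $\lambda^{1/2}$ maps $L^2((1+\lambda)d\mu)$ isometrically onto $L^2((1+\lambda^{-1})d\mu)$, since $\lambda\abs g^2(1+\lambda^{-1})=(1+\lambda)\abs g^2$.
\item Multiplication by $\lambda^{-1/2}$ is a unitary map $L^2(\mu)\to L^2(\lambda\,d\mu)$, and $\lambda^{1/2}$ maps $L^2(\mu)\to L^2(\lambda^{-1}d\mu)$. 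Each is surjective, being inverted by the reciprocal multiplication. Here $S^{\pm1/2}$ is understood as the continuous extension from the dense domain, which is consistent with the definition via (i).
\end{enumerate}
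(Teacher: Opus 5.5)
Your proposal is correct and follows the approach the paper has in mind: the paper omits the proof as textbook material obtained from the Spectral Theorem (Reed--Simon, Ch.~VIII), and your multiplication-operator picture, which identifies $\cH_{\pm}$ with $L^2(X,\lambda^{\pm1}d\mu)$ alongside $\cH\cong L^2(X,\mu)$, supplies exactly those details, including an honest meaning for $\cH_{\pm}\cap\cH$. One point needs tidying: bounded functions supported on $\{\epsilon<\lambda<K\}$ lie in $U\Dom(S)$ only because the spectral measure $\mu$ can be taken finite (as in Reed--Simon's version); alternatively, use the truncations $h\,\mathbbm{1}_{\{\epsilon<\lambda<K\}}$ of $h\in L^2(X,\lambda\,d\mu)$, which lie in $U\Dom(S)$ for any $\mu$.
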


\subsubsection{Concrete setting}
\label{sss: Concrete}

\medskip
\noindent
In Proposition  \ref{prop: Hpm particular} we collect those facts about the spaces $\cH_{\pm}$ which are specific to our particular setting, rely on the concrete realization $\cH=\cL_2$ of the basic Hilbert space, and on the special form \eqref{S-A-op-alt} of the operator $S=S^*>0$ in our setting. 

\begin{proposition}
[$\cH_{\pm}$ spaces, concrete setting]
\label{prop: Hpm particular}
\phantom{}
\\
Assume 
{\red \eqref{r-l2}} 
(and nothing more). 

\begin{enumerate} [(i)]

\item 
$\cL_\infty\subset\cH_{+}=\cL_\infty^{{\tt cl}+}$, 
and for $f\in\cL_{\infty}$, 
\begin{align}
\label{norm+}
\norm{f}_+^2=\norm{R\nabla f}_2^2. 
\end{align}

\item 
$\cH_{-}\subset\cL_{1}=\cH_{-}^{{\tt cl}1}$,
and for $f\in\cL_{1}$,  
\begin{align}
\label{norm- var}
\norm{f}_-^2
=
\sup_{g\in\cL_\infty} 
\big(2 \sprod{f}{g} - \norm{R\nabla g}_2^2\big)\leq \infty, 
\end{align}
with the interpretation of $\norm{f}_-=\infty$ as $f\in\cL_1\setminus \cH_-$.

\item 
$\nabla^* R \, \cV_{2} 
=
\nabla^* R \, \cK_{2} 
\subset 
\cH_{-}
=
(\nabla^* R \, \cK_{2})^{{\tt cl}-}$, 
and for $v\in\cV_2$, 
\begin{align}
\label{norm-}
\norm{\nabla^*R v}_{-}^2=\norm{\Pi v}_2^2.
\end{align}

\item 
The following cycle of Hilbert space isometries hold
\begin{align}
\label{my cycle}
(\cL_2, \norm{\cdot}_2) 
\,\buildrel{\displaystyle S^{-1/2}}\over {\displaystyle \longrightarrow}\, 
(\cH_{+} , \norm{\cdot}_+)
\,\buildrel{\displaystyle R\nabla}\over {\displaystyle \longrightarrow}\, 
(\cK_2, \norm{\cdot}_2) 
\,\buildrel{\displaystyle \nabla^*R}\over {\displaystyle \longrightarrow}\, 
(\cH_{-} , \norm{\cdot}_-)
\,\buildrel{\displaystyle S^{-1/2}}\over {\displaystyle \longrightarrow}\, 
(\cL_2, \norm{\cdot}_2) 
\end{align}

\end{enumerate}

\end{proposition}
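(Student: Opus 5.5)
The plan is to derive all four items from the factorization $S=(\nabla^*R)(R\nabla)$ in \eqref{S-A-op-alt}, the abstract facts of Proposition \ref{prop: Hpm general}, and the density of the core $\cA$ from Proposition \ref{prop: S is sa A is asa}.

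\emph{Item (i).} For $f\in\cL_\infty$, the assumption \eqref{r-l2} gives $R\nabla f\in\cV_2$. Summation by parts gives $\sprod{f}{Sf}=\sprod{R\nabla f}{R\nabla f}$ for $f\in\cA$. To reach all of $\cL_\infty$, I will approximate $f\in\cL_\infty$ by truncations $f\mathbbm{1}_{\Omega_K}$ (recentred), which are bounded uniformly in $\cL_\infty$. Dominated convergence, with dominating function $\norm{f}_\infty^2 r_k^2\in\cL_1$, shows that $R\nabla(f\mathbbm{1}_{\Omega_K})\to R\nabla f$ in $\cV_2$. Hence $\cL_\infty\subset\cH_+$ and \eqref{norm+} holds. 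The identity $\cH_+=\cL_\infty^{{\tt cl}+}$ then follows because $\cA\subset\cL_\infty$ is a core for $S$, so $\cA$ is $\norm{\cdot}_+$-dense in $\Dom(S)$.

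\emph{Item (ii).} Here I start from the variational formula \eqref{varp-}. By the core property, the supremum over $\Dom(S)$ can be taken over $\cA$, and therefore over $\cL_\infty$ using item (i) together with $\norm{\cdot}_+$-continuity of $g\mapsto2\sprod fg-\norm g_+^2$. This continuity holds when $f\in\cL_2$, which settles $f\in\cL_2\cap\cH_-$. For general $f\in\cL_1$, I will use the right-hand side of \eqref{norm- var} as the definition and identify it with the closure norm. The identification goes by approximating $f$ in $\cL_1$ by bounded recentred truncations $f_n$, and checking both lower semicontinuity and the reverse inequality via the duality pairing $\cL_1\times\cL_\infty$. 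The inclusion $\cH_-\subset\cL_1$ means that every $\norm{\cdot}_-$-Cauchy sequence in $\cL_2\cap\cH_-$ is $\cL_1$-Cauchy. That amounts to the bound $|\sprod fg|\le\norm f_-\norm g_+$ together with an embedding estimate $\norm g_+\lesssim\norm g_\infty$, which is available by item (i) since $\norm{R\nabla g}_2\le 2\norm g_\infty(\sum_k\norm{r_k}_2^2)^{1/2}$. By duality, this gives $\norm f_1=\sup_{\norm g_\infty\le1}\sprod fg\le C\norm f_-$. This is the delicate step, since it is where the pairing between $\cL_1$ and $\cL_\infty$ replaces Hilbert structure.

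\emph{Items (iii) and (iv).} For $v\in\cV_2$ and $g\in\cL_\infty$ we have $\sprod{\nabla^*Rv}{g}=\sprod{v}{R\nabla g}$, and $\nabla^*Rv\in\cL_1$ since $r_kv_k\in\cL_1$ by Cauchy--Schwarz. Plugging this into \eqref{norm- var} gives
\[
\norm{\nabla^*Rv}_-^2=\sup_{g\in\cL_\infty}\big(2\sprod{v}{R\nabla g}-\norm{R\nabla g}_2^2\big).
\]
As $g$ ranges over $\cL_\infty$, the vectors $R\nabla g$ range over $R\cU_\infty$, whose closure is $\cK_2$, and $\sup_{w\in\cK_2}(2\sprod vw-\norm w_2^2)=\norm{\Pi v}_2^2$. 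This proves \eqref{norm-}. It also shows that $\nabla^*R(I-\Pi)v$ has zero $-$-norm, hence vanishes, which gives $\nabla^*R\,\cV_2=\nabla^*R\,\cK_2$. For density of $\nabla^*R\,\cK_2$ in $\cH_-$: $Sf=\nabla^*R(R\nabla f)$ with $R\nabla f\in\cK_2$ for $f$ in the core $\cA$, and $S\cA$ is $-$-dense in $\cH_-$ because $S:\cH_+\to\cH_-$ is isometric onto by \eqref{a priori cycle}. For the cycle \eqref{my cycle}, the maps $S^{\mp1/2}$ are isometries by \eqref{a priori cycle}. By item (i), $R\nabla$ is an isometry from $\cL_\infty$ onto $R\cU_\infty$, and it extends to a unitary $\cH_+\to\cK_2$. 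By \eqref{norm-}, $\nabla^*R$ restricted to $\cK_2$ is an isometry onto $\cH_-$, its range being dense and closed. Finally, the composition around the cycle is consistent with the identity $S=\nabla^*R\,R\nabla$ on the core.
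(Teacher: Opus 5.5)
Your architecture is the paper's. You compute $\norm{f}_+$ through $R\nabla$, get $\cH_-\subset\cL_1$ from $\cL_1$--$\cL_\infty$ duality and $\norm{g}_+\lesssim\norm{g}_\infty$, plug $\nabla^*Rv$ into \eqref{norm- var} to obtain $\norm{\Pi v}_2^2$, and assemble the isometries. In (i) and in the density claims you are more careful than the paper: you approximate $\cL_\infty$ from the core $\cA$, and you get density of $S\cA\subset\nabla^*R\,\cK_2$ from the unitary $S:\cH_+\to\cH_-$. Three small repairs are needed there.
\begin{enumerate}[(a)]
\item Recentring $f\mathbbm{1}_{\Omega_K}$ by a constant destroys the support condition defining $\cA$; subtract $c_K\mathbbm{1}_{\Omega_K}$ instead.
\item For $f\in\cL_2$, the map $g\mapsto\sprod{f}{g}$ is graph-norm continuous, not $\norm{\cdot}_+$-continuous. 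That is still enough to pass from $\Dom(S)$ to $\cA$ and then sandwich $\cA\subset\cL_\infty\subset\Dom(S^{1/2})$.
\item $R\nabla\cL_\infty$ is only dense in, not equal to, $R\cU_\infty$. Only the closures agree, which the paper also uses tacitly.
\end{enumerate}

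The genuine gap is (ii) for $f\in\cL_1\setminus\cL_2$. Write $N(f)^2$ for the right side of \eqref{norm- var}. Part (iii) needs the direction ``$N(f)<\infty$ implies $f\in\cH_-$ and $\norm{f}_-\le N(f)$'', applied to $\nabla^*Rv=\sum_k r_kv_k$, which is in general only integrable. The paper's write-up of (ii) only records $\norm{f}_-^2\ge N(f)^2$.

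Truncations cannot give this direction, for three reasons. First, $f_n\to f$ in $\cL_1$ yields only lower semicontinuity, $N(f)\le\liminf_n N(f_n)$. Second, the $f_n$ do not converge to $f$ in $\cH_-$. Third, cutting off large values destroys the divergence structure that puts, e.g., $\partial_{-k}s_k$ into $\cH_-$, so $N(f_n)$ may be infinite.

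The missing step is a Riesz-representation argument built on the density $\cH_+=\cL_\infty^{{\tt cl}+}$ from (i). Suppose $N(f)<\infty$. Replacing $g$ by $tg$ and optimising in $t$ gives $\abs{\sprod{f}{g}}\le N(f)\norm{g}_+$ on $\cL_\infty$. So $g\mapsto\sprod{f}{g}$ extends to a functional of norm $N(f)$ on $\cH_+$. Its representative $\bar f\in\cH_-$, after your embedding into $\cL_1$, has the same pairing as $f$ with every $g\in\cL_\infty$. Hence it equals $f$, and $\norm{\bar f}_-=N(f)$.

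The same density gives two further facts you need:
\begin{enumerate}[(a)]
\item The embedding $\cH_-\to\cL_1$ is injective. Your Cauchy-sequence argument only produces a bounded map.
\item $\cL_1=\cH_-^{{\tt cl}1}$, which you never address. A $g\in\cL_\infty$ annihilating $S\cA\subset\cH_-$ satisfies $\sprod{R\nabla a}{R\nabla g}=0$ for all $a\in\cA$. Hence $R\nabla g=0$, and so $g=0$.
\end{enumerate}

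Alternatively, you can avoid general $f\in\cL_1$ in (iii). Approximate $v\in\cK_2$ by $R\nabla a_n$ with $a_n\in\cA$. Then $\nabla^*R\,R\nabla a_n=Sa_n\in\cL_2\cap\cH_-$ is $\norm{\cdot}_-$-Cauchy, and you pass to the limit using boundedness of $\nabla^*R:\cV_2\to\cL_1$.
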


\medskip
\noindent
{\bf Remarks:}
(1)
Note the difference between the middle terms in \eqref{a priori cycle} and \eqref{my cycle}. The main point is that the abstract and \emph{non-computable} operator $S^{1/2}$ is substituted by the \emph{explicitly computable} $R\nabla$ and $\nabla^* R$, and, accordingly, in the middle of the chain the Hilbert space $(\cL_2, \norm{\cdot}_2)$ is replaced by $(\cK_2, \norm{\cdot}_2)$. 
\\[5pt]
(2)
$\nabla^*R: (\cV_2, \norm{\cdot}_2) \to (\cH_{-}, \norm{\cdot}_{-})$
is actually a \emph{partial} isometry with $\Ker(\nabla^*R)=\cK_2^{\perp}$. We will denote
\begin{align}
\label{Riesz ops}
&
\Lambda :=
R\nabla S^{-1/2}
: \cL_2\to\cK_2,
&&
\Lambda^* :=
S^{-1/2} \nabla^* R
: \cV_2\to\cL_2, 
\end{align}
and refer to them as the \emph{Riesz operators}, due to their apparent  analogy to the Riesz operators (${\tt grad}\abs{\Delta}^{-1/2}$, respectively, $\abs{\Delta}^{-1/2}{\tt div}\cdot$) of harmonic analysis. We obviously have
\begin{align}
\label{ortproj}
&
\Lambda^*\Lambda=I_{\cL_2}, 
&&
\Pi :=
\Lambda\,\Lambda^*
:\cV_2\to \cK_2, 
\end{align}
the latter one being the orthogonal projection from $\cV_2$ to $\cK_2$. 
 
\begin{proof}
[Proof of Proposition \ref{prop: Hpm particular}]
\phantom{.}

\medskip
\noindent
(i)
Due to 
{\red \eqref{r-l2}},  
for $f\in\cL_\infty$ we have $Sf=\nabla^* RR\nabla f\in\cL_1$, $R\nabla f\in\cK_2$, and the following identities are  legitimate:
\begin{align*}
\norm{f}_+^2
=
\sprod{f}{\nabla^* RR\nabla f}
=
\sprod{R\nabla f}{R\nabla f}
=
\norm{R\nabla f}_2^2
\end{align*}
It follows that $\cL_\infty\subset \cH_+\cap\cL_2=\Dom(S^{1/2})$ and \eqref{norm+} holds.  

In order to prove $\cH_+=\cL_\infty^{{\tt cl}+}$ note that, for $f\in \cL_\infty$ and $g\in\Dom(S^{1/2})$, $\sprod{f}{g}_+=\sprod{S^{1/2}f}{S^{1/2}g}$. Hence (since $\cL_2=\Ran(S^{1/2})^{{\tt cl}2}$) $\sprod{f}{g}_+=0$ for all $g\in\Dom(S^{1/2})$ implies $S^{1/2}f=0$, and thus $f=0$. 

\medskip
\noindent
(ii)
From \eqref{varp-}, $\cL_\infty\subset\cH_+\cap\cH$ and \eqref{norm+} we get, for $f\in\cL_1$
\begin{align*}
\norm{f}_-^2
&
\geq
\sup_{g\in\cL_\infty} \big(2\sprod{f}{g}- \norm {R\nabla g}_2^2\big)
\\
&
\geq
\sup_{g\in\cL_\infty} \big(2\sprod{f}{g}- 4 \sum_k\norm{r_k}_2^2 \norm {g}_\infty^2\big)
=
\big(4 \sum_k\norm{r_k}_2^2 \big)^{-1}
\norm{f}_1^2
\end{align*}
and hence, due to 
{\red \eqref{r-l2}}, 
$\cH_-\subset \cL_1$.
$\cL_{1}=\cH_{-}^{{\tt cl}1}$ follows from $\cL_\infty\subset\cH_+$ and Hahn-Banach. 

\medskip
\noindent
(iii)
To prove \eqref{norm-}
let $v\in \cV_2$ and apply \eqref{norm- var} to 
$\nabla^*Rv \buildrel {\red \eqref{r-l2}} \over \in\cL_1$:
\begin{align*}
\norm{\nabla^*Rv}_-^2
=
\sup_{g\in\cL_\infty}
\big(2 \sprod{\nabla^*Rv}{g} - \norm{R\nabla g}_2^2\big)
=
\sup_{g\in\cL_\infty}
\big(2 \sprod{v}{R\nabla g} - \norm{R\nabla g}_2^2\big)
=\norm{\Pi v}_2^2.
\end{align*}
Above, the middle step is legitimate since 
$g\in\cL_\infty$, 
$v\in\cK_2$, 
$\nabla^*Rv \buildrel {\red \eqref{r-l2}} \over \in\cL_1$, and
$R\nabla g  \buildrel {\red \eqref{r-l2}} \over \in\cL_2$. 
The last step holds 
since $\cK_2 = (R\nabla \cL_\infty)^{{\tt cl}2}$, by definition \eqref{K2def}.  
\\
To prove 
$\cH_{-}
=
(\nabla^* R \, \cK_{2})^{{\tt cl}-}$
let $g\in\cL_\infty$ be such that for all $v\in \cK_2$
\begin{align*}
0
=
\sprod{\nabla^* R v}{g} 
=
\sprod{v} {R\nabla g}.
\end{align*}
Then $g=0$ and hence, since $\cH_+=\cL_\infty^{{\tt cl}+}$, it follows that $\cH_{-}=(\nabla^* R \, \cK_{2})^{{\tt cl}-}$

\medskip
\noindent
(iv)
We only have to prove the middle terms in \eqref{my cycle}. 
From Proposition \ref{prop: Hpm particular} (i) readily follows that $R\nabla$ maps isometrically $(\cH_+, \norm{\cdot}_+)$ \emph{into} $(\cK_2, \norm{\cdot}_2)$. To prove that it is also \emph{surjective} assume that  $v\in\cK_2$ is such that for all $g\in\cL_\infty$
\begin{align*}
0
=
\sprod{v}{R\nabla g}
=
\sprod{\nabla^* R v}{g}, 
\end{align*}
and thus, due to $\nabla^* R \cK_2 \subset \cH_- \subset \cL_1$, we get $\nabla^* R v =0$, and  hence, $v=0$ due to \eqref{norm-}. 

\end{proof}

\section{Diffusive bounds}
\label{s: Diffusive bounds}

In this section we prove annealed diffusive bounds for the displacement of the random walker.

\subsection{Upper}
\label{ss: Upper}

\begin{lemma}
[$\cH_-$-bounds on drifts]
\label{lem: H- bounds}
\phantom{}
\\
Assume 
{\red \eqref{r-l2}}
and 
{\green \eqref{new H-1}}. 
Then, 
\begin{align}
\label{H- bounds}
\phi_i, \psi_i \in\cH_{-}, 
\qquad
i=1,\dots,d, 
\end{align}
where $\phi_i$ and  $\psi_i$ are the components of the drift-fields defined in \eqref{drift fields}.  
\end{lemma}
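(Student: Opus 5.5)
Both drift components can be written as $\nabla^* R v$ for an explicit $v\in\cV_2$; Proposition \ref{prop: Hpm particular}(iii) then gives membership in $\cH_-$ together with the bound $\norm{\nabla^*Rv}_-\le\norm{v}_2$. Alternatively, I would use the variational formula \eqref{norm- var} directly and bound $\sprod{\phi_i}{g}$ and $\sprod{\psi_i}{g}$ by $C\norm{R\nabla g}_2$ for $g\in\cL_\infty$. Since \eqref{norm- var} gives $\norm{f}_-^2\le\sup_g(2C\norm{R\nabla g}_2-\norm{R\nabla g}_2^2)\le C^2$, this yields the claim. I will carry out the second route; it is the safer one because both $\phi_i$ and $\psi_i$ lie in $\cL_1$, so the pairings make sense.

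\emph{Symmetric drift.} Let $e=e_i$ and $g\in\cL_\infty$. By \eqref{drift fields} and translation invariance of $\pi$,
\[
\sprod{\phi_i}{g}=\sprod{s_e}{g}-\sprod{s_e\circ\tau_{-e}}{g}=\sprod{s_e}{g-T_e g}=-\sprod{r_e}{r_e\,\partial_e g}.
\]
Cauchy--Schwarz then bounds this by $\norm{r_e}_2\norm{r_e\partial_e g}_2\le C\norm{R\nabla g}_2$, using \eqref{r-l2} and the normalization $\norm{R\nabla g}_2^2=\frac12\sum_k\norm{r_k\partial_k g}_2^2$. Equivalently, $\phi_i=\nabla^*R v$ with $v_{\pm e}=\pm r_{\pm e}$ and all other components zero. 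One checks that $v\in\cV_2$ using $r_{-e}(\tau_e\omega)=r_e(\omega)$; the factor from the convention $\nabla^*u=\sum_k u_k$ only changes constants.

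\emph{Antisymmetric drift.} Here I use the divergence form \eqref{b-is-curl-of-h}. By \eqref{drift fields} and stationarity,
\[
\sprod{\psi_i}{g}=\sprod{b_e}{g+T_e g}=\sum_{l}\sprod{h_{e,l}}{g+T_eg}.
\]
The key step is to turn $g+T_eg$ into a gradient in the $l$ direction by summation by parts, using the plaquette antisymmetry \eqref{h-tensor}. Concretely, \eqref{h-tensor} gives $h_{e,l}=-h_{e,-l}\circ\tau_l$, so pairing the terms $l$ and $-l$ yields
\[
\sprod{h_{e,l}}{f}+\sprod{h_{e,-l}}{f}=\sprod{h_{e,l}}{f-T_{-l}f}\quad\text{after shifting}.
\]
This is the identity \eqref{skew-sym-id} applied with $u=\nabla g\in\cU$ tested against the constant direction. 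I expect the result to be
\[
\sprod{\psi_i}{g}=c\sum_l\big\langle h_{e,l},\,(\text{combination of }\partial_{\pm l}g\text{ and }\partial_{\pm l}T_eg)\big\rangle.
\]
Each term then has the form $\sprod{r_l^{-1}h_{e,l}}{r_l\,\partial_l g}$, possibly evaluated at a shifted point. Since shifts preserve $\pi$ and $r_{-l}\circ\tau_l=r_l$, each is bounded by $\norm{r_l^{-1}h_{e,l}}_2\norm{r_l\partial_lg}_2\le C\norm{R\nabla g}_2$ by \eqref{new H-1}.

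The main obstacle is this discrete summation-by-parts bookkeeping. One must make sure that every difference $\partial_l g$ appearing is paired with the weight $h_{e,l}$ at the \emph{same edge} as the conductance $r_l$, so that \eqref{new H-1} applies exactly and no $r_e^{-1}$ factor appears. I would try first to write $\psi_i$ as $\nabla^*Hu$ with $u$ the constant field $u_k=\pm\delta_{k,\pm e}$, which lies in $\cU$. Then \eqref{H-op} gives $\sprod{\nabla^*H\nabla\cdot}{\cdot}$-type expressions, where $H$ acts with $(T_{-l}+I)h_{e,l}(T_e+I)$, and the differences fall on the $l$-edge. Integrability is not an issue, since $h_{k,l}\in\cL_1$ by \eqref{h-l1} and $g$ is bounded, so all pairings are finite and the manipulations are legitimate.
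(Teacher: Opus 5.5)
Your overall route is the paper's: test against $g\in\cL_\infty$ in \eqref{norm- var}, sum by parts so that a lattice difference lands on $g$, and close with a weighted Cauchy--Schwarz. The symmetric drift is handled correctly and exactly as in the paper. The antisymmetric drift, however, is left at ``I expect the result to be''. At the step you yourself flag as the crux, both justifications you give are wrong, so this half is a genuine gap.

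First, the pairing identity has the shift in the wrong direction. From $h_{e,-l}=-h_{e,l}\circ\tau_{-l}$ and invariance of $\pi$ one gets $\sprod{h_{e,l}}{f}+\sprod{h_{e,-l}}{f}=\sprod{h_{e,l}}{f-T_lf}=-\sprod{h_{e,l}}{\partial_lf}$. The direction matters. $\partial_lf$ sits on the edge $(x,x+l)$ carrying $r_l$, which is the weight \eqref{new H-1} attaches to $h_{e,l}$. By contrast, $\partial_{-l}f$ would require $r_{-l}^{-1}h_{e,l}\in\cL_2$, which is not assumed.

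Second, with $f=g+T_eg$ you get the terms $\sprod{h_{e,l}}{(\partial_lg)\circ\tau_e}$, whose difference sits on the edge with conductance $r_l\circ\tau_e$. Stationarity and $r_{-l}\circ\tau_l=r_l$ do not turn this into $\norm{r_l^{-1}h_{e,l}}_2\norm{r_l\partial_lg}_2$ as you claim; you would need $h_{e,l}/(r_l\circ\tau_e)\in\cL_2$. The missing ingredient is the other plaquette symmetry in \eqref{h-tensor}, $h_{e,l}=-h_{-e,l}\circ\tau_e$. It converts the term into $-\sprod{h_{-e,l}}{\partial_lg}$, which is controlled by \eqref{new H-1} with $k=-e$ rather than $k=e$.

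The paper sidesteps the shifted terms entirely. By \eqref{phi-and-psi}, $\psi_i=b_{e_i}-b_{-e_i}$; equivalently, $b_e\circ\tau_{-e}=-b_{-e}$ by \eqref{b-flow}, so your $\sprod{b_e}{T_eg}$ is just $-\sprod{b_{-e}}{g}$. For each $k$ one has $\sprod{b_k}{g}=-\frac12\sum_l\sprod{h_{k,l}}{\partial_lg}$, whence $\norm{b_k}_-^2\le\sum_l\norm{r_l^{-1}h_{k,l}}_2^2$ directly from \eqref{norm- var}.

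Your first route can also be completed, and more simply than your last paragraph suggests. No $\nabla^*H\nabla$ structure is involved, since the field $u$ there is constant. Using $h_{k,l}=-h_{l,k}$, one gets $\psi_i=-\sum_k r_kv_k=-\nabla^*Rv$ with $v_k:=r_k^{-1}(h_{k,e_i}-h_{k,-e_i})$. This $v$ lies in $\cV_2$ by \eqref{h-tensor} and \eqref{new H-1}, so Proposition~\ref{prop: Hpm particular}(iii) gives $\norm{\psi_i}_-\le\norm{v}_2<\infty$.
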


\begin{proof}
We rely on \eqref{norm- var} from Proposition \ref{prop: Hpm particular} (ii), . 
\begin{align*}
\norm{ \partial_{-k} s_k}_{-}^2
& 
=
\sup_{\chi\in\cL_\infty}
\big(
2\sprod{\chi}{\partial_{-k} s_k} 
- 
\norm{R\nabla  \chi}_2^2
\big)
\leq
\sup_{\chi\in\cL_\infty}
\big(
2\sprod{\chi}{\partial_{-k} s_k} 
- 
\norm{R_k\partial_k \chi}_2^2
\big)
\\[5pt]
&
=
\sup_{\chi\in\cL_\infty}
\big(
2 \sprod{R_k \partial_{k}\chi}{r_k}
- 
\norm{R_k\partial_k \chi}_2^2
\big)
\leq
\sup_{\chi\in\cL_2}
\big(
2\sprod{\chi}{r_k}
- 
\norm{\chi}_2^2\big)
=
\norm{r_k}_2^2
\buildrel
{\red \eqref{r-l2}} 
\over<
\infty
\end{align*}
\begin{align*}
\norm{b_k}_-^2
&
= 
\sup_{\chi\in\cL_\infty}
\big(
\sprod{\chi}{ \sum_{l\in{\cN}} \partial_{-l}h_{k,l}} 
- 
\norm{R \nabla\chi}_2^2 
\big)
=
\sup_{\chi\in\cL_\infty}
\sum_{l\in{\cN}}
\big(
\sprod{R_l \partial_l \chi}{r_l^{-1} h_{k,l}}
- 
\norm{R_l \partial_l\chi}_2^2 
\big)
\\[5pt]
&
\leq
\sum_{l\in{\cN}}
\sup_{\chi\in\cL_2}
\big(
\sprod{\chi}{r_l^{-1} h_{k,l}}
- 
\norm{\chi}_2^2
\big)
=
\sum_{l\in{\cN}}
\norm{r_l^{-1}h_{k,l}}_2^2
\buildrel
{\green \eqref{new H-1}} 
\over<
\infty.
\end{align*}
Due to \eqref{drift fields}, from the previous computations we readily get 
\begin{align*}
& 
\sum_{i=1}^d
\norm{\phi_i}_-^2 
\leq 
\sum_{k\in{\cN}} 
\norm{r_k}_2^2
\buildrel
{\red \eqref{r-l2}} 
\over<
\infty,
&& 
\sum_{i=1}^d
\norm{\psi_i}_-^2 
\leq 
\sum_{k,l\in{\cN}} 
\norm{r_l^{-1} h_{k,l}}_2^2
\buildrel
{\green \eqref{new H-1}} 
\over<
\infty,
\end{align*}
which proves the claims in \eqref{H- bounds}.
\end{proof}

\begin{proposition}
[Diffusive upper bound]
\label{prop:  upper}
\phantom{}
\\
Assume 
{\red \eqref{r-l2}}
and 
{\green \eqref{new H-1}}. 
Then, 
\begin{align}
\label{upper-bound}
\limsup_{t\to\infty} t^{-1} \expect{\abs{X(t)}^2} 
<
\infty
\end{align}
\end{proposition}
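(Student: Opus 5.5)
We use the decomposition \eqref{mart-decomp}, $X(t)=M(t)+I(t)+J(t)$, and bound each of the three terms separately in annealed $\cL_2$:
\begin{align*}
\expect{\abs{X(t)}^2}\le 3\big(\expect{\abs{M(t)}^2}+\expect{\abs{I(t)}^2}+\expect{\abs{J(t)}^2}\big).
\end{align*}
The aim is to show that each term is at most $Ct$.

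\emph{Martingale term.} $M(t)$ is a quenched martingale with jumps in $\cN$. Its predictable quadratic variation has rate $\sum_{k\in\cN} k\otimes k\, p_k(\eta_s)$. Under the annealed measure the process $\eta_s$ is stationary with marginal $\pi$. Hence
\begin{align*}
\expect{\abs{M(t)}^2}=t\int_\Omega \sum_{k\in\cN}p_k\,d\pi=t\sum_{k\in\cN}\int_\Omega s_k\,d\pi=t\sum_{k}\norm{r_k}_2^2 ,
\end{align*}
which is finite by \eqref{r-l2}. Here we used that $\int b_k\,d\pi=0$, because $\sum_k b_k=0$ and $b_k=-b_{-k}\circ\tau_k$. (Alternatively one can simply bound $\abs{b_k}\le s_k$.)

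\emph{Integral terms.} We apply Varadhan's $\cH_-$-bound for additive functionals of a stationary Markov process whose generator is $L=-S+A$. For $f\in\cL_2\cap\cH_-$ it gives
\begin{align*}
\expect{\Big(\int_0^t f(\eta_s)\,ds\Big)^2}\le C\,t\,\norm{f}_-^2 .
\end{align*}
The standard proof uses the resolvent $u_\lambda=(\lambda-L)^{-1}f$ with $\lambda=1/t$ and Dynkin's martingale decomposition. Its key input is the symmetric-part estimate $\sprod{u_\lambda}{Su_\lambda}\le\norm{f}_-^2$. This follows from
\begin{align*}
\lambda\norm{u_\lambda}^2+\norm{u_\lambda}_+^2=\sprod{f}{u_\lambda}\le\norm{f}_-\norm{u_\lambda}_+ ,
\end{align*}
where the contribution of $A$ vanishes by antisymmetry.

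By Lemma \ref{lem: H- bounds} we have $\phi_i,\psi_i\in\cH_-$. Applying the bound componentwise yields $\expect{\abs{I(t)}^2}\le Ct\sum_i\norm{\phi_i}_-^2$, and the analogous estimate for $J(t)$.

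\emph{Main obstacle.} The difficulty is that $\phi_i,\psi_i$ are only known to lie in $\cL_1\cap\cH_-$, not in $\cL_2$, so the resolvent argument cannot be run directly. We will handle this by truncation. Choose $f_n\in\cL_2$ with $f_n\to f$ both in $\cL_1$ and in $\norm{\cdot}_-$; such $f_n$ exist because $\cH_-\cap\cL_2$ is dense in $\cH_-$ and $\cL_1=\cH_-^{{\tt cl}1}$ by Proposition \ref{prop: Hpm particular}. One concrete choice is $f_n=\nabla^*Rv_n$ with $v_n\in R\cU_\infty$ bounded, approximating $v$ in $\cV_2$ as in Proposition \ref{prop: Hpm particular} (iii).

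Apply the Varadhan bound to $f_n$. Stationarity gives $\int_0^t f_n(\eta_s)ds\to\int_0^t f(\eta_s)ds$ in $\cL_1(\mathbf P)$, since the $\cL_1$ norm of the difference is at most $t\norm{f_n-f}_1$. Passing to an a.s.\ convergent subsequence and using Fatou's lemma then gives the bound for $f$ itself with constant $Ct\norm{f}_-^2$. Combining the three terms proves \eqref{upper-bound}.
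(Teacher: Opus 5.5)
Your proof is correct and is essentially the paper's. It uses the decomposition \eqref{mart-decomp}, the bracket computation $t^{-1}\expect{\abs{M(t)}^2}=\sum_k\norm{r_k}_2^2$, and Varadhan's $\cH_-$-bound fed by Lemma \ref{lem: H- bounds}; your approximation-plus-Fatou step just makes explicit the paper's remark that Varadhan's bound extends from $\cH_-\cap\cL_2$ to $\cH_-\cap\cL_1$. Two minor slips: first, $\sum_k p_k=\sum_k s_k$ holds pointwise because $\sum_k b_k=0$, and the two facts you cite do not give $\int b_k\,d\pi=0$ for each $k$ (though you never need that); second, the simultaneous $\cL_1$- and $\cH_-$-convergence of $f_n$ comes from the density of $\cH_-\cap\cL_2$ in $\cH_-$ together with the continuous embedding $\cH_-\hookrightarrow\cL_1$ proved in Proposition \ref{prop: Hpm particular}(ii), not from $\cL_1=\cH_-^{{\tt cl}1}$.
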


\begin{proof}
We use martingale decomposition \eqref{mart-decomp} and provide diffusive upper bounds  separately  for the three terms on the right hand side. 

The first term $t\mapsto M(t)\in\RR^d$ is a martingale whose conditional covariance (bracket) process is
\begin{align*}
\langle M_i,M_j\rangle(t)
=
\delta_{i.j} \left(p_{e_i}(\eta_s)+p_{-e_i}(\eta_s)\right)
\end{align*}
Hence, in the annealed setting, 
\begin{align}
t^{-1}\expect{\abs{M(t)}^2}
=
\sum_{k\in{\cN}}\int_{\Omega}  s_{k}(\omega) \, d\pi(\omega)
=
\sum_{k\in{\cN}}\int_{\Omega}  r_{k}^2(\omega) \, d\pi(\omega)
\buildrel
{\red \eqref{r-l2}} 
\over<
\infty
\label{M-bound}
\end{align}

On the other hand, recalling   Varadhan's $\cH_{-}$-bound (cf. \cite{komorowski-landim-olla-12}, Theorem 2.7) from \eqref{H- bounds} (which relies on 
{\red \eqref{r-l2}} 
and 
{\green \eqref{new H-1}})
we conclude
\begin{align}
\label{IJ-bound}
\sup_{0<t<\infty}
t^{-1}
\big(
\expect{\abs{I(t)}^2}
+
\expect{\abs{J(t)}^2}
\big)
<\infty.
\end{align}
In the end, \eqref{upper-bound} follows from \eqref{M-bound} and \eqref{IJ-bound}. 

\end{proof}

\noindent
{\bf Remark:}
In \cite{komorowski-landim-olla-12}, Theorem 2.7, the upper bound is stated for $f\in\cH_{-}\cap \cL_2$. However, in fact, the proof works equally well for $f\in\cH_{-}\cap \cL_1$, which covers our assumption 
{\red \eqref{r-l2}}.

\subsection{Lower}
\label{ss: Lower}

The following proposition is adapted from the unpublished note \cite{toth-86b}. 

\begin{proposition}
[Diffusive lower bound]
\label{prop:lower}
\phantom{}
\\
Assume 
{\blue \eqref{rrec-l2}}. 
For any unit vector $e\in\RR^d$
\begin{align*}
\liminf_{t\to\infty} t^{-1} \expect{(e\cdot X(t))^2} 
\geq 
\sum_{k\in\cN}
\ol{s}_k (e\cdot k)^2
\end{align*}
whith $\ol{s}_k>0$, $k\in\cN$, defined in \eqref{s-bar-def} below. 
\end{proposition}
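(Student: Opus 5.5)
I expect $\ol{s}_k$ in \eqref{s-bar-def} to be the harmonic mean $\ol{s}_k=\big(\int_\Omega s_k^{-1}\,d\pi\big)^{-1}$. This is positive and finite exactly under {\blue \eqref{rrec-l2}}, and the bound I aim for is the one that would give this $\ol{s}_k$. The plan is a Cauchy--Schwarz (dual variational) argument. I pair $e\cdot X(t)$ with an auxiliary additive functional $Z(t)$ and use
\begin{align*}
\expect{(e\cdot X(t))^2}\;\geq\;\frac{\expect{(e\cdot X(t))\,Z(t)}^2}{\expect{Z(t)^2}}.
\end{align*}
Let $N_k(t)$ be the number of $k$-jumps up to time $t$. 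Put
\begin{align*}
Z(t):=\sum_{k\in\cN}(e\cdot k)\,c_k\int_0^t s_k(\eta_{u-})^{-1}\,dN_k(u),
\end{align*}
where the deterministic constants satisfy $c_{-k}=c_k$. Each jump along the edge $\{x,x+k\}$ is thus weighted by the reciprocal conductance of that edge. By \eqref{s b symm} this weight is the same for both orientations of the edge.

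First step: compute $\expect{Z(t)^2}$. Write $Z$ as its compensated martingale part plus the compensator. The compensator density is $\sum_k(e\cdot k)c_k\,p_k/s_k=\sum_k(e\cdot k)c_k\,(1+b_k/s_k)$. The constant part vanishes, since $\sum_k(e\cdot k)c_k=0$ by $c_{-k}=c_k$. The remaining part $\sum_k(e\cdot k)c_k b_k/s_k$ is odd under time reversal.

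The reversed stationary environment process has generator $L^*=-S-A$, so under reversal $b$ flips sign and $s$ stays fixed. Using this, I plan to show that $Z$ is the half-difference of a forward and a backward martingale, $Z=(Z_+-Z_-)/2$, with no drift term left over. This is the same mechanism by which the forward/backward decomposition of $X$ kills the $\psi$-term. Its bracket gives
\begin{align*}
\expect{Z(t)^2}\;\le\; t\sum_k (e\cdot k)^2c_k^2\int_\Omega \frac{p_k}{s_k^2}\,d\pi\;=\;t\sum_k(e\cdot k)^2c_k^2\int_\Omega s_k^{-1}\,d\pi .
\end{align*}
Here the $b_k/s_k^2$ contribution integrates to zero by \eqref{s b symm} and shift invariance of $\pi$. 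This is exactly where {\blue \eqref{rrec-l2}} enters.

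Second step: compute $\expect{(e\cdot X(t))Z(t)}$. Decompose $X$ in the same way: $X=(M_+-M_-)/2+I(t)$, where the $\psi$-contribution has cancelled. The martingale parts pair through their joint brackets. Since $X$ and $Z$ jump simultaneously, the covariation density is $\sum_k (e\cdot k)^2 c_k\, p_k/s_k$, whose expectation is $\sum_k(e\cdot k)^2c_k$. So the leading term is $t\sum_k (e\cdot k)^2 c_k$.

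Choosing $c_k=\ol{s}_k$ then gives the ratio
\begin{align*}
t\sum_k(e\cdot k)^2\ol{s}_k^{\,2}\Big/\sum_k(e\cdot k)^2\ol{s}_k^{\,2}\ol{s}_k^{-1}=t\sum_k\ol{s}_k(e\cdot k)^2,
\end{align*}
which is the claimed bound after dividing by $t$ and taking $\liminf$.

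The main obstacle is the cross terms in the second step. These are the correlations of $Z$ with $I(t)=\int\phi$ and the forward--backward mixed pairings $\expect{M_\pm Z_\mp}$. I will try to show that they are nonnegative or $o(t)$, in contrast to the reversible case where this is nearly automatic. My first attempt is to exploit the time-reversal antisymmetry of both $e\cdot X$ and $Z$: average the forward and reversed representations of the product, so that the mixed terms either cancel or combine into a manifestly nonnegative square. If an exact identity fails, the fallback is a direct Cauchy--Schwarz bound. There, $\expect{I(t)Z(t)}$ is controlled via the $\cH_-$ bound on $\phi$ from Lemma \ref{lem: H- bounds} (using {\red \eqref{r-l2}}), and one would have to prove a gain showing it is of lower order.
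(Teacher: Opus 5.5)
Your weights $c_k=\ol{s}_k$ are the right ones, but the first step rests on a false claim: the forward/backward mechanism works the other way round. Take a jump functional $F(t)=\sum_{i:\theta_i<t}g_{\xi_i}(\eta_i^-)$ with $g_{-k}(\tau_k\omega)=-g_k(\omega)$. Let $F_+$ be its forward martingale part, with compensator density $\sum_kp_kg_k$. Let $F_-$ be the martingale part of its time reversal, with compensator density $\sum_k(s_k-b_k)g_k$, since the reversed rates are $s_k-b_k$. Adding and subtracting the two decompositions gives $F_++F_-=-2\int_0^t\sum_ks_kg_k(\eta_s)\,ds$ and $F=(F_+-F_-)/2+\int_0^t\sum_kb_kg_k(\eta_s)\,ds$. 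So the $s$-part of the compensator is absorbed into martingales, and the $b$-part survives.

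For $X$ ($g_k=k$) it is therefore $I=\int\phi$ that cancels and $J=\int\psi$ that remains, not the converse. For your $Z$ ($g_k=\ol{s}_k(e\cdot k)/s_k$) the surviving term is $\int_0^t e\cdot\alpha(\eta_s)\,ds$ with $e\cdot\alpha=\sum_k(e\cdot k)\ol{s}_kb_k/s_k$. Hence $Z=Z_++\int_0^te\cdot\alpha$, and a drift \emph{is} left over. It need not even be centred: $\int_\Omega e\cdot\alpha\,d\pi=2\sum_{i=1}^d\ol{s}_{e_i}(e\cdot e_i)\int_\Omega b_{e_i}/s_{e_i}\,d\pi$. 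If $s_{e_1}=\abs{b_{e_1}}$, then $b_{e_1}/s_{e_1}=\mathrm{sign}\,b_{e_1}$, whose mean need not vanish although $b_{e_1}$ is centred. For instance, in $d=2$ take unit vertical conductances and totally asymmetric horizontal flow $b_{e_1}(x,y)=g(y\bmod 3)$ with $(g(0),g(1),g(2))=(2,-1,-1)$; then $\int_\Omega e_1\cdot\alpha\,d\pi=-4/5$. In such cases $\expect{Z(t)^2}\asymp t^2$, while $\expect{(e\cdot X(t))Z(t)}=o(t^{3/2})$ because $\expect{X(t)}=0$ and $X$ is diffusive. So your Cauchy--Schwarz ratio is $o(t)$. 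Even when this mean vanishes, $\expect{(\int_0^te\cdot\alpha)^2}$ inflates the denominator and the sharp constant is lost. The cross terms you call the main obstacle are also left open, and the $\cH_-$ fallback only gives $O(t)$, not $o(t)$.

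The missing idea is to use the compensated process $Z_+=Z-\int_0^te\cdot\alpha(\eta_s)\,ds$ itself as the test functional. For your weights $\sum_ks_kg_k=\sum_k(e\cdot k)\ol{s}_k=0$, so the first identity (with $F=Z$) gives $F_-=-Z_+$. Thus $Z_+$ is simultaneously a forward martingale (quenched) and a backward martingale (under the stationary annealed measure).

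Splitting $Z_+(t)=Z_+(s)+(Z_+(t)-Z_+(s))$ and using $Z_+(0)=0$ yields $\expect{Z_+(t)f(\eta_s)}=0$ for all $s\le t$. Hence $Z_+(t)$ is orthogonal to every additive functional, in particular to $e\cdot(I(t)+J(t))$. It is also orthogonal to the complementary forward martingale $e\cdot M-Z_+$, because their cross-bracket density has zero $\pi$-mean (pair $k$ with $-k$ using \eqref{s b symm}).

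Therefore $e\cdot X=Z_++\big(e\cdot M-Z_++e\cdot(I+J)\big)$ is an orthogonal decomposition. By Pythagoras, $\expect{(e\cdot X(t))^2}\ge\expect{Z_+(t)^2}=t\sum_k\ol{s}_k(e\cdot k)^2$, with no cross-term estimates needed. In your language, $\expect{(e\cdot X)Z_+}=\expect{Z_+^2}$, and Cauchy--Schwarz returns the same bound. This is exactly the paper's proof.
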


\begin{proof}
We must start with some notation. Let 
\begin{align}
\ol{s}_k:= 
\left(\int _\Omega s_k(\omega)^{-1} \, d\pi (\omega)\right)^{-1}
\buildrel
{{\blue {\eqref{rrec-l2}}}}
\over
> 0
\label{s-bar-def}
\end{align}
and note that, due to \eqref{s-cond},  $\ol{s}_k=\ol{s}_{-k}$. 
Further, let $\alpha, \beta :\Omega \to \RR^d$, 
\begin{align*}
\alpha(\omega)
&
:=
\sum_{k\in\cN}
p_k(\omega) \ol{s}_k s_k(\omega)^{-1}k
=
\sum_{k\in\cN}
\ol{s}_k b_k(\omega) s_k(\omega)^{-1}k
\\[5pt]
\beta(\omega)
&
:=
\phi(\omega)+\psi(\omega) - \alpha(\omega)
\end{align*}
(see \eqref{phi-and-psi} for the definition of $\phi, \psi:\Omega\to \RR^d$).

We denote
\begin{align*}
\theta_0
&
:=
0, 
\qquad
\theta_{i+1}
:=
\inf\{t>\theta_i: \lim_{\vareps\searrow0}\left(X(t+\varepsilon)-X(t-\varepsilon)\right)\not=0\} 
\\[5pt]
\xi_{i}
&
:=
\lim_{\vareps\searrow0}\left(X(\theta_i+\varepsilon)-X(\theta_i-\varepsilon)\right)\not=0\} \in\cN
\\[5pt]
\eta_i^-
&
:=
\eta_{\theta_i^-}, 
\qquad 
\eta_i^+
:=
\eta_{\theta_i^+}.
\end{align*}
In plain words, $\theta_i$, $i\geq1$, is the time of the $i$-th jump of the continuous time nearest neighbour walk walk $t\mapsto X(t)$, $\xi_i\in \cN$ is the $i$-th jump and $\eta_i^-$ / $\eta_i^+$  is the environment seen right before/after the $i$-th jump.   
Finally, we define the c.a.d.l.a.g. processes $t\mapsto Z(t)\in\RR^d$ and  $t\mapsto Y(t)\in\RR^d$
\begin{align*}
Z(t)
&
:=
\sum_{i: 0<\theta_i<t}
\ol{s}_{\xi_i} s_{\xi_i}(\eta_i^-)^{-1} \xi_i
-\int_0^t \alpha(\eta_s)\, ds
\\[5pt]
Y(t)
&
:=
\sum_{i: 0<\theta_i<t}
\left( 1 -\ol{s}_{\xi_i} s_{\xi_i}(\eta_i^-)^{-1} \right) \xi_i
-\int_0^t \beta(\eta_s)\, ds
\end{align*}
and note that these are \emph{quenched martingales} adapted to the natural \emph{forward filtration} $\cF_t:=\sigma(\eta_s: s\leq t)$. Indeed, the "compensators" $\alpha$ and $\beta$ are chosen so that $\pi$-a.s.
\begin{align}
\label{fwmart}
\lim_{h\searrow0}h^{-1}
\condexpectom{Z(t+h)-Z(t)}{\cF_t}
=
\lim_{h\searrow0}h^{-1}
\condexpectom{Y(t+h)-Y(t)}{\cF_t}
=
0
\end{align}
readily follows. 

Moreover, $t\mapsto Z(t)$ is also a \emph{backward martingale}, with respect to the \emph{backward filtration} $\cF^*_t:=\sigma(\eta_s: s\geq t)$ and \emph{under the annealed/stationary measure}. Compute 
\begin{align}
\notag
\condexpect{Z(t-h)-Z(t)}{\cF^*_t}
&
=
\condexpect{\sum_{i: t-h<\theta_i<t}
\ol{s}_{\xi_i} s_{\xi_i}(\eta_i^-)^{-1} (-\xi_i)
}{\cF^*_t}
+\alpha(\eta_t) h +\ordo(h)
\\
\notag
&
=
\condexpect{\sum_{i: t-h<\theta_i<t}
\ol{s}_{-\xi_i} s_{-\xi_i}(\eta_i^+)^{-1} (-\xi_i)
}{\cF^*_t}
+\alpha(\eta_t) h +\ordo(h)
\\
\label{bwmart-prel}
&
=
\condexpect{\sum_{i: t<\theta^*_i<t+h}
\ol{s}_{\xi^*_i} s_{\xi^*_i}(\eta_i^{*-})^{-1} \xi_i^*
}{\cF^*_t}
+\alpha(\eta^*_t) h +\ordo(h)
\end{align}
where $t\mapsto \eta^*_t:=\eta_{-t}$ is the stationary time-reversed environment process, and $\theta^*_i$, $\xi^*_i$, $\eta_i^{*\pm}$ denote the jump times, jumps and environment seen right before/after jumps along the \emph{time-reversed trajectory}. In the above computation the first step is just explicit expression of the left hand side, in the secind step we use the symmetries $\ol{s}_k=\ol{s}_{-k}$ and $s_k(\omega)=s_{-k(\tau_k\omega)}$. Finally, in the third step we switch from the forward to the backward dynamics. Note the substantial difference between \eqref{fwmart} and \eqref{bwmart-prel}: while in \eqref{fwmart} the conditional expectations are taken under the \emph{quenched} measure $\condexpectom{\cdot}{\cF_t}$, in  \eqref{bwmart-prel} these are taken under the annealed (and stationary) measure $\condexpect{\cdot}{\cF_t^*}$.

The last ingredient is the observation that the stationary (annealed) time-reversed environment process $t\mapsto \eta^*_t$ is similar to the forward process $t\mapsto \eta_t$, with jump rates
\begin{align*}
&
p^*_k(\omega) 
= 
p_{-k}(\tau_k\omega),
&&
\text{that is: }
&&
s^*_k(\omega)
=
s_k(\omega), 
&&
b^*_k(\omega)
=
-b_k(\omega).  
\end{align*}
Hence, 
\begin{align*}
\condexpect{\sum_{i: t<\theta^*_i<t+h}
\ol{s}_{\xi^*_i} s_{\xi^*_i}(\eta_i^{*-})^{-1} \xi_i^*
}{\cF^*_t}
=
h 
\sum_{k\in\cN}
p^*_k( \eta^*_t)
\ol{s}_k s_k( \eta^*_t)^{-1}k
+\ordo(h)
=
-\alpha( \eta^*_t) h +\ordo(h).
\end{align*}
Putting all these together we get indeed
\begin{align}
\label{bwmart}
\lim_{h\searrow0}h^{-1}
\condexpectom{Z(t-h)-Z(t)}{\cF^*_t}
=
0. 
\end{align}

Note, however, the subtle difference between \eqref{fwmart} and \eqref{bwmart}. The conditional expectations in \eqref{fwmart} are meant under the \emph{quenched} probability measure  $\probabom{\dots}$ and for $\pi$-almost all $\omega\in\Omega$ while the conditional expectation in \eqref{bwmart} is taken under the annealed/stationary measure.

Thus, we have the decomposition \eqref{X-decomp-alt} (to be compared with \eqref{mart-decomp}):
\begin{align}
\label{X-decomp-alt}
X(t)
=
\underbrace{Z(t)+Y(t)}_{\displaystyle M(t)} + 
\underbrace{\int_{0}^{t} \left(\phi(\eta_s)+\psi(\eta_s)\right)\, ds}_{\displaystyle I(t)+J(t)}. 
\end{align}

Straightforward computations yield the conditional covariance (a.k.a. bracket-) processes
\begin{align*}
\condexpectom {dZ(t) \land dZ(t)}{\cF_t}
& 
=
\Phi(\eta_t)\, dt
\\[5pt]
\condexpectom {dZ(t) \land dY(t)}{\cF_t}
& 
=
\Psi(\eta_t) \, dt
\end{align*}
where 
$\Phi, \Psi: \Omega\to \RR^{d\times d}$ are 
\begin{align*}
\Phi(\omega)
&
:=
\sum_{k\in\cN}
\ol{s}_k
\left(\ol{s}_k s_k(\omega)^{-1} + \ol{s}_k b_{k}(\omega) s_k(\omega)^{-2}\right) k\land k
\\[5pt]
\Psi(\omega)
&
:=
\sum_{k\in\cN}
\ol{s}_k
\left(1 + b_{k}(\omega) s_k(\omega)^{-1}\right) 
\left(1- \ol{s}_k s_k(\omega)^{-1}\right)
k\land k.
\end{align*}
(Since the bracket process $\condexpect{dY(t) \land dY(t)}{\cF_t}$ is not needed in the forthcoming argument we don't write down its explicit expression.)

Next, using the (anti)symmetries in \eqref{s-cond} and \eqref{b-flow} we  readily obtain 
\begin{align}
\label{Phi-Psi-int}
&
\int_{\Omega} 
\Phi(\omega) \, d\pi(\omega)
=
\sum_{k\in\cN} 
\ol{s}_k k\land k,
&&
\int_{\Omega} 
\Psi(\omega) \, d\pi(\omega)
=
0
\end{align}
Summarizing: From $t\mapsto Z(t)$ and $t\mapsto Y(t)$ being (forward) martingales and the second identity in \eqref{Phi-Psi-int} it follows that for any $t_1, t_2 >0$
\begin{align}
\label{uncorr1}
\expect{Z(t_1)Y(t_2)} =0. 
\end{align}
Further on, from $t\mapsto Z(t)$ being a \emph{forward-and-backward} martingale it follows that for any  $t_1, t_2 >0$
\begin{align}
\label{uncorr2}
\expect{Z(t_1)(I(t_2)+J(t_2)} =0. 
\end{align}
Finally from \eqref{X-decomp-alt}, \eqref{uncorr1}, \eqref{uncorr2} we obtain 
\begin{align*}
\expect{X(t)\land X(t)}
\geq 
\expect{Z(t)\land Z(t)}
=
t 
\sum_{k\in\cN} 
\ol{s}_k k\land k
\end{align*}
where the first inequality is meant in the $d\times d$-matrix sense and the last identity follows from \eqref{Phi-Psi-int}. 

\end{proof}

\section{Proof of Theorem \ref{thm: main}}
\label{s: Proof of Theorem {main}}

\subsection{Kipnis-Varadhan theory - the abstract form}
\label{ss: Kipnis-Varadhan}

The proof of Theorem \ref{thm: main} is based on the non-reversible (i.e. non-self-adjoint) and non-graded version of martingale approximation a la Kipnis-Varadhan, whose abstract form is summarized concisely in this section.

Let $(\Omega, \cF, \pi)$ be a probability space and $t\mapsto\eta_t\in\Omega$ a Markov process assumed to be stationary and ergodic under the probability measure $\pi$, whose infinitesimal generator $L$ and resolvent $R_\lambda:= (\lambda I-L)^{-1}$ act on the Lebesgue spaces 
\begin{align*}
\cL_{p}
:=
\{f\in\cL_p(\Omega, \pi): \int_\Omega f \, d\pi=0\}, 
\qquad
p\in\{1,2,\infty\}.
\end{align*}
We assume that the infinitesimal generator acting on $\cL_2$ decomposes as 
\begin{align*}
&
L=-S+A, 
&&
S:=-(L+L^*)/2, 
&&
A:=(L-L^*)/2,  
\end{align*}
meaning that the symmetric and antisymmetric parts, $S$, respectively, $A$, have a common subspace of definition $\cA\subset\cL_2$ which serves as a core for their closure as (unbounded) linear operators acting on $\cL_2$. Moreover, we assume that $S, A:\cA\to \cL_2$ are essentially self-adjoint, respectively, essentially skew-self-adjoint. We assume that the self-adjoint part $S$ is ergodic on its own, that is its eigenvalue $0$ is non-degenerate. 

We also define the self-adjoint operators $S^{1/2}$, $S^{-1/2}$, in terms of the Spectral Theorem, and the Hilbert spaces $(\cH_+, \norm{\cdot}_+)$, $(\cH_-, \norm{\cdot}_-)$ as in \eqref{H+ def}, \eqref{H- def}.  

We quote the Kipnis-Varadhan martingale approximation in the non-self-adjoint setting in the form stated in  \cite{toth-86a,horvath-toth-veto-12, toth-13}. See the monograph \cite{komorowski-landim-olla-12} for historic background. 

\begin{theorem}
[\cite{toth-86a,horvath-toth-veto-12, toth-13} Theorem KV]
\label{thm: toth-86a}
\phantom{}
\\
Let $\varphi\in \cL_1$ such that for all $\lambda >0$, $R_\lambda \varphi\in \cL_2$. If the following two conditions hold
\begin{align}
\label{condition A&B}
& 
\lim_{\lambda\to0} 
\lambda^{1/2}\norm{R_\lambda \varphi}_2=0, 
&&
\lim_{\lambda\to0} 
\norm{S^{1/2}R_\lambda\varphi-v}_2=0, \ \ v\in\cL_2,  
\end{align}
then  there exists a square integrable martingale $t\mapsto Z(t)$, with stationary and
ergodic increments, adapted to the natural filtration $(\cF_t)_{0\leq t<\infty}$ of the Markov process $t\mapsto\eta_t$, and  with variance
\begin{align*}
\expect{\abs{Z(t)}^2}=2 \norm{v}_2^2 \, t,
\end{align*}
such that 
\begin{align}
\label{KV-martingale-approximation}
\lim_{t\to\infty}
t^{-1} \expect{\abs{\int_0^t \varphi(\eta_s)\, ds - Z(t)}^2}=0. 
\end{align}
\end{theorem}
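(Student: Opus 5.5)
The plan follows the standard Kipnis--Varadhan argument in its non-reversible form. Set $u_\lambda := R_\lambda\varphi = (\lambda I - L)^{-1}\varphi \in \cL_2$, which is assumed.

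\emph{Step 1: energy identity.} Pair the resolvent equation $\lambda u_\lambda - L u_\lambda = \varphi$ with $u_\lambda$. Since $A$ is skew, $\sprod{u_\lambda}{Au_\lambda}=0$, and so
\begin{align*}
\lambda\norm{u_\lambda}_2^2 + \norm{S^{1/2}u_\lambda}_2^2 = \sprod{u_\lambda}{\varphi}.
\end{align*}
When $\varphi$ is only in $\cL_1$, this pairing has to be justified. I would do it either by approximating $\varphi$ with $\cL_2$ functions, or by reading the right-hand side as the $\cH_+$--$\cH_-$ duality pairing once the two conditions in \eqref{condition A&B} are known. Those conditions say that $\lambda^{1/2}u_\lambda\to0$ in $\cL_2$ and that $S^{1/2}u_\lambda\to v$ in $\cL_2$.

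\emph{Step 2: the approximate martingales.} Define the Dynkin martingale
\begin{align*}
M_\lambda(t) := u_\lambda(\eta_t)-u_\lambda(\eta_0)-\int_0^t Lu_\lambda(\eta_s)\,ds .
\end{align*}
Since $Lu_\lambda = \lambda u_\lambda-\varphi$, this gives
\begin{align*}
\int_0^t\varphi(\eta_s)\,ds = M_\lambda(t) - u_\lambda(\eta_t)+u_\lambda(\eta_0) + \lambda\int_0^t u_\lambda(\eta_s)\,ds .
\end{align*}
Under stationarity, $M_\lambda$ has stationary increments, and its quadratic variation has expectation $\expect{M_\lambda(t)^2}=2t\norm{S^{1/2}u_\lambda}_2^2$. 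This is the carr\'e du champ identity for a pure jump generator, with $S$ as the symmetric part.

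\emph{Step 3: a Cauchy estimate.} Apply the same variance formula to differences: $\expect{(M_\lambda(t)-M_\mu(t))^2} = 2t\norm{S^{1/2}(u_\lambda-u_\mu)}_2^2$. By the second condition in \eqref{condition A&B}, this is $o(t)$ uniformly in $t$ as $\lambda,\mu\to0$. Hence $M_\lambda(t)\to Z(t)$ in $L^2$ for each $t$. The limit $Z$ is a square integrable martingale with stationary increments and $\expect{Z(t)^2}=2t\norm{v}_2^2$. Its increments are ergodic because they are functionals of the ergodic stationary process $\eta$.

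\emph{Step 4: the remainder.} Choose $\lambda = 1/t$. The boundary terms $u_\lambda(\eta_t)-u_\lambda(\eta_0)$ contribute at most $4\norm{u_\lambda}_2^2 = 4t\cdot\lambda\norm{u_\lambda}_2^2 = o(t)$, using the first condition in \eqref{condition A&B}. By Cauchy--Schwarz and stationarity, the integral term satisfies
\begin{align*}
\expect{\Big(\lambda\int_0^t u_\lambda(\eta_s)\,ds\Big)^2} \le \lambda^2 t^2\norm{u_\lambda}_2^2 = t\,\lambda\norm{u_\lambda}_2^2 = o(t).
\end{align*}
Finally, $\expect{(M_{1/t}(t)-Z(t))^2} = 2t\norm{S^{1/2}u_{1/t}-v}_2^2 = o(t)$. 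Combining the three bounds gives \eqref{KV-martingale-approximation}.

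\emph{Main obstacle.} The delicate step is that $\varphi$ lies only in $\cL_1$. The variance identity for $M_\lambda$ needs $u_\lambda\in\Dom(L)$ with $Lu_\lambda$ well defined, and $Lu_\lambda = \lambda u_\lambda - \varphi$ is only in $\cL_1$. I would first handle $\varphi$ by truncation, $\varphi_n\in\cL_2$ with $\varphi_n\to\varphi$ in $\cL_1$, and then pass to the limit in the martingale identities using Step 1. Alternatively, one can construct $M_\lambda$ directly as a compensated jump sum, whose quadratic variation involves only $\sum_k s_k(\partial_k u_\lambda)^2$, and that quantity is integrable.
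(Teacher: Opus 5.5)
Your argument is correct and is essentially the standard non-reversible Kipnis--Varadhan proof from the references the paper cites: the resolvent $u_\lambda=R_\lambda\varphi$, Dynkin martingales $M_\lambda$ that are Cauchy in $L^2$ by the second limit condition, and remainder terms killed at $\lambda=1/t$ by the first. The paper does not reprove the theorem but imports it, remarking only, as you do, that the extension to $\varphi\in\cL_1$ with $R_\lambda\varphi\in\cL_2$ goes through as in De Masi et al.

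One small point: Step 1 is never used in Steps 2--4. Of your two fixes for $\varphi\in\cL_1$, only the second one is convincing, namely building $M_\lambda$ directly, e.g.\ as a compensated jump sum in the present pure-jump setting. The first fails because $\cL_1$-truncation combined with the energy identity gives no control of $S^{1/2}R_\lambda(\varphi_n-\varphi)$ in $\cL_2$.
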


\medskip
\noindent
{\bf Remarks, comments:}
(1) 
In the common traditional formulations of this or equivalent theorems (including, e.g.,  \cite{toth-86a}, \cite{olla-01}, \cite{komorowski-landim-olla-12}, \cite{horvath-toth-veto-12}, \cite{toth-13}, \cite{toth-18b}) it is assumed that $\varphi\in\cL_2$. However, as noted already in \cite{demasi-et-al-89} for the reversible/self-adjoint setting, the arguments work neatly under the weaker assumption $\varphi\in\cL_1$ and $R_{\lambda}\varphi\in\cL_2$, for all $\lambda>0$.
\\[3pt]
(2)
By general abstract arguments, for all $\lambda>0$
\begin{align*}
&
\norm{R_\lambda}_{2\to2}
\leq \lambda^{-1},
&&
\norm{S^{1/2} R_\lambda }_{2\to2}
\leq \lambda^{-1/2},
&&
\norm{R_\lambda S^{1/2}}_{2\to2}
\leq \lambda^{-1/2},
&&
\norm{S^{1/2}R_\lambda S^{1/2}}_{2\to2}
\leq 1,
\end{align*}
and hence, for $\varphi\in \cH_{-}$ we have a priori
\begin{align*}
&
\sup_{\lambda>0}
\lambda^{1/2}\norm{R_\lambda \varphi}_2<\infty, 
&&
\sup_{\lambda>0}
\norm{S^{1/2}R_\lambda \varphi}_2<\infty,
\end{align*}
still somewhat short of \eqref{condition A&B}.
\\[3pt]
(3)
Conditions \eqref{condition A&B} of Theorem \ref{thm: toth-86a} are difficult to check directly. (An exception, where this could be done, is the RWRE problem treated in \cite{toth-86a}.) Sufficient conditions are known under the names of Strong Sector Condition, respectively,  Graded Sector Condition. See the monograph \cite{komorowski-landim-olla-12} for context and details. However, these sector conditions hold only under very special structural assumptions about the Markov process considered: a graded structure of the infinitesimal generator $L$ acting on an accordingly graded Hilbert space $\cL_2$. This structural assumption simply doesn't hold in many cases of interest, including our current problem. 

\bigskip
\noindent
The next theorem, quoted from \cite{horvath-toth-veto-12}, provides a sufficient condition which does not assume a graded structure of the infinitesimal generator $L$ acting on the Hilbert space $\cL_{2}(\Omega,\pi)$. Let
\begin{align}
\label{B core abstract}
\cB:=
\{f\in\cH_-\cap\cL_2: \ 
S^{-1/2}f \in \Dom(A),  \ \  AS^{-1/2}f\in\cH_-\cap\cL_2
\}
\end{align}
and $B:\cB\to\cL_{2}$ defined as
\begin{align}
\label{B op abstract}
Bf
:=
S^{-1/2}A S^{-1/2}f. 
\end{align}
Note that the operator $B:\cB\to\cL_{2}$ is unbounded (except for the cases when the operator $S:\cL_{2}\to \cL_{2}$ is boundedly invertible) and  skew symmetric. Indeed, for $f,g\in\cB$ all the straightforward steps below are legitimate
\begin{align*}
\sprod{f}{S^{-1/2}AS^{-1/2}g}
\!
=
\!
\sprod{S^{-1/2}f}{AS^{-1/2}g}
\!
=
\!
-
\sprod{AS^{-1/2}f}{S^{-1/2}g}
\!
=
\!
-
\sprod{S^{-1/2}AS^{-1/2}f}{g}
\end{align*}
Of course, it could happen that the subspace $\cB$ is not dense in $\cL_{2}$, or, even worse, that simply $\cB=\{0\}$. Even if $\cB$ is a dense subspace  in $\cL_{2}$, in principle it could still happen that the operator $B$ (which in this case is densely defined and skew-symmetric) is not essentially skew-self-adjoint.

\begin{theorem} 
[\cite{horvath-toth-veto-12} Theorem 1]
\label{thm: rsc}
\phantom{}
\\
Assume that there exists a subspace $\cC\subseteq\cB\subset \cL_2 = \cC^{{\tt cl}2}$, and the operator $B:\cC\to\cL_2$ is essentially skew-self-adjoint (that is,  $\overline B=-B^*$).  Then for any $\varphi\in\cH_-\cap \cL_1$ the conditions of Theorem \ref{thm: toth-86a} (and hence the martingale approximation \eqref{KV-martingale-approximation}) hold.
\end{theorem}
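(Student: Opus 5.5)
The plan is to reduce both conditions in \eqref{condition A&B} to a single statement: strong convergence, as $\lambda\to0$, of
\[
u_\lambda:=S^{1/2}R_\lambda\varphi
\]
to an explicit limit $v$ in $\cL_2$.

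\textbf{Reductions and a priori bounds.} Fix $\varphi\in\cH_-\cap\cL_1$ and set $\psi:=S^{-1/2}\varphi\in\cL_2$, using the isometry in \eqref{a priori cycle}. The candidate limit is
\[
v:=(I-\ol B)^{-1}\psi .
\]
Since $\ol B=-B^*$ is skew-self-adjoint, $I-\ol B$ is a bijection of $\Dom(\ol B)$ onto $\cL_2$, and its inverse is a contraction.

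I would first prove everything for $\varphi\in\cH_-\cap\cL_2$. Then $R_\lambda\varphi\in\Dom(L)\subset\cL_2$, and the energy identity
\[
\sprod{R_\lambda\varphi}{\varphi}
=\lambda\norm{R_\lambda\varphi}_2^2+\norm{u_\lambda}_2^2
\]
holds; its left-hand side equals $\sprod{u_\lambda}{\psi}$. By Remark (2) after Theorem \ref{thm: toth-86a}, we have $\norm{u_\lambda}_2\le\norm{\psi}_2=\norm{\varphi}_-$ and $\lambda^{1/2}\norm{R_\lambda\varphi}_2\le\norm{\varphi}_-$. These bounds are uniform in $\lambda$ and depend only on $\norm{\varphi}_-$.

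\textbf{Identifying weak limits via the core $\cC$.} Take $g\in\cC\subseteq\cB$. Then $h:=S^{-1/2}g\in\cL_2$, because $\cH_-\cap\cL_2=\Ran(S^{1/2})$. Moreover $h\in\Dom(A)\cap\Dom(S^{1/2})$ and $Ah\in\cH_-\cap\cL_2$. Pair the resolvent equation $(\lambda+S-A)R_\lambda\varphi=\varphi$ with $h$ and move $S^{1/2}$ and $A$ to the other side using skew-symmetry of $A$. This gives
\[
\lambda\sprod{R_\lambda\varphi}{S^{-1/2}g}+\sprod{u_\lambda}{(I+B)g}=\sprod{\psi}{g}.
\]
The first term is bounded by $\lambda^{1/2}\,\norm{\varphi}_-\,\norm{S^{-1/2}g}_2$, so it tends to $0$.

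Now let $w$ be any weak limit point of $u_\lambda$. Then $\sprod{w}{(I+B)g}=\sprod{\psi}{g}$ for all $g\in\cC$. Hence $w\in\Dom((I+B|_\cC)^*)=\Dom(I-\ol B)$, using $B^*=-\ol B$, and $(I-\ol B)w=\psi$. Therefore $w=v$. Since the limit point is unique, $u_\lambda\rightharpoonup v$ weakly.

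\textbf{Upgrading to strong convergence.} From the energy identity and skew-symmetry of $\ol B$,
\[
\limsup\norm{u_\lambda}_2^2\le\lim\sprod{u_\lambda}{\psi}=\sprod{v}{(I-\ol B)v}=\norm{v}_2^2 .
\]
Combined with weak lower semicontinuity of the norm, this gives $\norm{u_\lambda}_2\to\norm{v}_2$, hence strong convergence. This is the second condition in \eqref{condition A&B}. The energy identity then yields
\[
\lambda\norm{R_\lambda\varphi}_2^2=\sprod{u_\lambda}{\psi}-\norm{u_\lambda}_2^2\to0,
\]
which is the first condition.

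\textbf{Main obstacle: extending from $\cH_-\cap\cL_2$ to $\cH_-\cap\cL_1$.} The delicate step is passing to general $\varphi\in\cH_-\cap\cL_1$, for which $R_\lambda\varphi\in\cL_2$ and the pairings above must be justified. I would approximate $\varphi$ in $\norm{\cdot}_-$ by $\varphi_n\in\cH_-\cap\cL_2$, for example from $\nabla^*R\,\cK_2$ truncated, in the spirit of Proposition \ref{prop: Hpm particular}(iii). The maps
\[
\varphi\mapsto u_\lambda,\qquad \varphi\mapsto\lambda^{1/2}R_\lambda\varphi,\qquad \varphi\mapsto v
\]
are all contractions from $(\cH_-,\norm{\cdot}_-)$ into $\cL_2$, uniformly in $\lambda$. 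So these operators extend by continuity, and a standard $\varepsilon/3$ argument transfers both limits in \eqref{condition A&B} to $\varphi$. The extended object must coincide with the genuine $R_\lambda\varphi$; I would check this on the common core $\cA$ by duality.
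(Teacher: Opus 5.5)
Your argument is correct, and it takes a different route from the one the paper relies on. The paper gives no proof of this statement; it imports Theorem~1 of \cite{horvath-toth-veto-12}. That proof, to my recollection, works at the level of operators. It factorizes $S^{1/2}R_\lambda S^{1/2}$ through $(\lambda I+S)^{\pm1/2}$ and the $\lambda$-dependent skew operators $(\lambda I+S)^{-1/2}A(\lambda I+S)^{-1/2}$. The two conditions of Theorem~\ref{thm: toth-86a} then follow from strong resolvent convergence of these operators towards $(I-\ol B)^{-1}$, a Trotter--Kato-type step.

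You avoid these approximating operators entirely:
\begin{enumerate}[$\circ$]
\item the energy identity $\lambda\norm{R_\lambda\varphi}_2^2+\norm{u_\lambda}_2^2=\sprod{u_\lambda}{\psi}$ gives uniform bounds;
\item testing against $g\in\cC$ shows every weak limit point solves $(I+B^*)w=\psi$, and $B^*=-\ol B$ makes it unique;
\item $\sprod{v}{\ol Bv}=0$ upgrades weak to norm convergence, and condition (A) falls out of the same identity.
\end{enumerate}
This is in effect Varadhan's weak-compactness proof of the strong sector condition, with the sector bound replaced by essential skew-self-adjointness on the core. It uses the hypothesis exactly as stated and raises no domain questions about $\lambda$-dependent operators. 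Since $\psi\mapsto u_\lambda$ is a contraction, you also recover strong operator convergence, so nothing is lost.

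One input from the abstract setting you use tacitly: $\cA$ must be a core for $L$ itself. That is what gives $\Dom(L)\subset\Dom(S^{1/2})$, the energy identity, and the pairing identity with $h=S^{-1/2}g\in\Dom(A)\cap\Dom(S^{1/2})$, all by graph-norm approximation from $\cA$. You should say so explicitly.

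The extension to $\varphi\in\cH_-\cap\cL_1$ is genuinely needed, since $\phi_i,\psi_i$ are only in $\cL_1$; the paper also treats it as routine. Your plan works, but you should name the ingredient that makes it work: the continuous embedding $\norm{f}_1\le C\norm{f}_-$, which is what the proof of Proposition~\ref{prop: Hpm particular}(ii) actually shows. The steps are then:
\begin{enumerate}[$\circ$]
\item Take $\varphi_n\in\Ran(S)$, which is dense in $\cH_-$ by definition, so no truncation of $\nabla^*R\,\cK_2$ is needed. Convergence $\varphi_n\to\varphi$ in $\norm{\cdot}_-$ then forces $\varphi_n\to\varphi$ in $\cL_1$.
\item Hence $R_\lambda\varphi_n\to R_\lambda\varphi$ in $\cL_1$, because $\norm{R_\lambda}_{1\to1}\le\lambda^{-1}$ by stationarity of $\pi$.
\item Since $(R_\lambda\varphi_n)_n$ and $(S^{1/2}R_\lambda\varphi_n)_n$ are Cauchy in $\cL_2$, the limits coincide, and closedness of $S^{1/2}$ identifies $S^{1/2}R_\lambda\varphi$. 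The duality check on $\cA$ is therefore unnecessary.
\end{enumerate}
This also proves $R_\lambda\varphi\in\cL_2$, which is a standing hypothesis of Theorem~\ref{thm: toth-86a}. You should list it among the conclusions rather than leave it implicit.
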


\noindent
{\bf Remarks:} 
(1) 
In \cite{horvath-toth-veto-12} the theorem is formulated in slightly different terms. However, it is easy to see that this form follows directly from that of Theorem 1 in \cite{horvath-toth-veto-12}. 
\\[3pt]
(2)
The conditions of Theorem \ref{thm: rsc} are equivalent to $\cB\subset \cL_{2} = \cB^{{\tt cl}2}$ and $B:\cB\to\cL_2$ essentially skew-self-adjoint. The formulation of the theorem allows flexibility in choosing the core $\cC\subseteq \cB$. 
\\[3pt]
(3)
In view of Proposition \ref{prop: Hpm general} (iv) \eqref{a priori cycle}, the requirement that $B:=S^{-1/2}AS^{-1/2}$ be a well-defined skew-self-adjoint operator acting on the Hilbert space $(\cL_2, \norm{\cdot}_2)$ is equivalent to $A=(L-L^*)/2$ be a well-defined skew-self-adjoint operator acting on the Hilbert space $(\cH_+, \norm{\cdot}_+)$.
\\[3pt]
(4)
The condition stated in Proposition 2.1.2 of \cite{olla-01} is precisely a somewhat hidden form of von Neumann's criterion for (skew-)self-adjointness (cf. \cite{read-simon-vol1} Theorem VIII.3) of the operator $B$ defined above. The formulation in Theorem \ref{thm: rsc} allows for checking skew-self-adjointness in various other ways, as demonstrated in the forthcoming proof of our main result. 

\subsection{Proof of Theorem \ref{thm: main}}
\label{ss: Checking RSC}

We check the conditions of Theorem \ref{thm: rsc} for the concrete case under consideration, when the operators $L$, $S=-(L+L^*)/2$, and $A=(L-L^*)/2$ given in \eqref{inf-gen-op}, \eqref{S-A-ops}, \eqref{S-A-op-alt} are the infinitesimal generator of the environment process $t\mapsto\eta_t$ cf. \eqref{envproc}, acting on the Hilbert space $\cL_2$, and its self-adjoint and skew-self-adjoint parts. 

In this case the subspace $\cB$ of \eqref{B core abstract} is 
\begin{align*}
\cB
:=
\{
f
\in\cH_-\cap \cL_2: \ 
S^{-1/2}f \in \Dom(\nabla^* H \nabla), 
\ \ 
\nabla^* H \nabla S^{-1/2}f\in\cH_-\cap \cL_2
\}
\end{align*}
and the operator $B:\cB\to\cL_2$ of \eqref{B op abstract} acts as
\begin{align*}
Bf:= 
S^{-1/2} \nabla^* H \nabla S^{-1/2} f
=
\Lambda^* R^{-1} H R^{-1}  \Lambda f
=
\Lambda^* \Pi R^{-1} H R^{-1}  \Pi \Lambda f
\end{align*}
Recall from \eqref{common core} the common core $\cA$ of the operators $S$ and $A$, and (noting that $\cA\subset \Dom(S) \subset \Dom(S^{1/2})$) let 
\begin{align*}
\cC
&
:=
\{f=S^{1/2} g: g \in \cA\}
\end{align*}
The proof of essential skew-self-adjointness of the operator $B:\cC\to\cL_2$ relies on various parts of Proposition \ref{prop: Hpm particular},  thus only on the integrability condition 
{\red \eqref{r-l2}}. 

Obviously, $\cC\subseteq \cH_{-}\cap \cL_{2}$  and since $\cA$ is a core for $S$ and $\Ker(S)=\Ran(S)^{\perp}=\{0\}$,  we also have  $\cL_2=\cC^{{\tt cl}2}$. For $f\in\cC$, the equation $f=S^{1/2}g$ determines \emph{uniquely} $g\in\cA \subset \cL_\infty$. Furthermore, 
\begin{align*}
\nabla^*H\nabla S^{-1/2} f
=
\nabla^ * R \,
\underbrace{R^{-1} H \nabla 
\underbrace{g}_{\in\cA}}_{\in\cV_\infty}
\in\cH_-\cap\cL_{2}. 
\end{align*}
The last step is due to Proposition \ref{prop: Hpm particular} (iii). 
Thus, indeed, 
\begin{align*}
\cC\subset\cB\subset\cL_2=\cC^{{\tt cl}2}, 
\end{align*}
where the superscript $^{{\tt cl}2}$ denotes closure with respect to the norm $\norm{\cdot}_2$. 

\begin{proposition}
[Skew-self-adjointness of $S^{-1/2}AS^{-1/2}$]
\label{prop: B-is-skew-self-adjoint}
\phantom{}
\\
The linear operator $B:\cC\to\cL_2$ is essentially skew-self-adjoint.  
\end{proposition}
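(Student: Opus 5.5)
The plan is to verify von Neumann's criterion (cf. \cite{read-simon-vol1} Theorem VIII.3). Since $B$ is densely defined on $\cC$ and skew-symmetric, it suffices to show $\Ker(B^*\mp I)=\{0\}$. In other words, if $f\in\cL_2$ satisfies $\sprod{f}{(B\pm I)c}=0$ for all $c\in\cC$, then $f=0$.

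\textbf{Step 1: transfer to an equation for $\psi\in\cH_+$.} Write $c=S^{1/2}g$ with $g\in\cA$, and put $u:=\Lambda f\in\cK_2$. By the cycle \eqref{my cycle}, $u=R\nabla\psi$ for a unique $\psi:=S^{-1/2}f\in\cH_+$, and $\norm{\psi}_+=\norm{f}_2$. Using $\Lambda S^{1/2}g=R\nabla g$ and $Bc=\Lambda^*R^{-1}H\nabla g$, the orthogonality condition becomes
\begin{align*}
\sprod{R\nabla\psi}{R\nabla g}\pm\sprod{\nabla\psi}{H\nabla g}=0,\qquad g\in\cA.
\end{align*}
Here the second pairing makes sense because $g\in\cA$ makes $R^{-1}H\nabla g\in\cV_\infty$. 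This says that $\psi\in\cH_+$ is a weak solution of $(S\mp A)\psi=0$. So the proposition reduces to a uniqueness statement: such a $\psi$ must vanish.

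\textbf{Step 2: energy argument with truncation.} Formally, testing with $g=\psi$ kills the antisymmetric term and yields $\norm{\psi}_+^2=0$. The plan is to make this rigorous by testing with Lipschitz truncations $\psi_M:=\max(-M,\min(M,\psi))$. First I extend the identity from $g\in\cA$ to all $g\in\cL_\infty$ with $\sum_k s_k\abs{\partial_k g}^2\in\cL_1$. For this I multiply $g$ by indicators of $\Omega_K$ and let $K\to\infty$; this is exactly where the restrictive choice \eqref{Omega_K} is used, since it controls $r^{\pm1}$ and $h$ at the neighbouring sites $\tau_m\omega$ as well. The symmetric term passes to the limit because $\abs{\partial_k\psi_M}\le\abs{\partial_k\psi}$ pointwise, and $\sprod{R\nabla\psi}{R\nabla\psi_M}\to\norm{\psi}_+^2$ by dominated convergence.

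\textbf{Step 3: the antisymmetric term (main obstacle).} I expect the main obstacle to be showing $\sprod{\nabla\psi}{H\nabla\psi_M}\to0$. The assumption $r_l^{-1}h_{k,l}\in\cL_2$ alone does not make $\sprod{\nabla\psi}{H\nabla\psi}$ absolutely convergent. I would first use \eqref{skew-sym-id} to rewrite this pairing as $\sprod{\,\cdot\,}{\sum_k b_k\partial_k\psi_M}$-type expressions. I would then compare them with $\int\sum_k b_k\,\partial_k G_M(\psi)\,d\pi$, where $G_M'=F_M$ is the primitive of the truncation function. The latter integral vanishes by divergence-freeness \eqref{b is divfree} together with stationarity and \eqref{b-flow}. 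The discrete chain-rule error is then bounded pointwise, using \eqref{domin}, by a constant times $s_k\abs{\partial_k\psi}^2$ restricted to the edges where $\psi$ or $\psi(\tau_k\cdot)$ exceeds $M$ in absolute value. Since $\sum_k s_k\abs{\partial_k\psi}^2\in\cL_1$ (this is $\norm{\psi}_+^2$), this error tends to $0$ as $M\to\infty$.

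Combining Steps 2 and 3 gives $\norm{\psi}_+^2=0$, hence $f=S^{1/2}\psi=0$. The same argument works for both signs. If the truncation bookkeeping in Step 3 fails, my fallback is to approximate $\psi$ in $\cH_+$ by elements of $\cA$, via $\cH_+=\cL_\infty^{{\tt cl}+}$ from Proposition \ref{prop: Hpm particular}(i), and to control the antisymmetric term along the approximating sequence.
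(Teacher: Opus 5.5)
Your Step 1 is a correct reformulation. With $u:=\Lambda f\in\cK_2$, orthogonality of $f$ to $\Ran(B\pm I)$ is equivalent to $\sprod{u}{R\nabla g}\pm\sprod{u}{R^{-1}H\nabla g}=0$ for all $g\in\cA$. So the proposition is a uniqueness statement for weakly $(S\mp A)$-harmonic fields in $R^{-1}\cK_2$.

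\textbf{The gap.} Steps 2--3 treat $\psi=S^{-1/2}f$ as a measurable function on $\Omega$, and nothing allows this. The vector $f$ is an arbitrary element of $\Ker(B^*\mp I)$, not known to lie in $\Ran(S^{1/2})=\cH_-\cap\cL_2$. So $\psi$ is only an element of the abstract completion $\cH_+=\cL_\infty^{{\tt cl}+}$. Concretely, all you have is the curl-free field $R^{-1}u\in\cU$. Such a field is in general \emph{not} $\nabla\psi$ for any $\psi\in\cL$: its lift to $\ZZ^d$ is a cocycle whose potential is not stationary, exactly as for correctors. Already for $d=1$, $s\equiv1$, over an i.i.d.\ product space, a centred non-constant function of the coordinate at the origin, taken as $u_{+1}$, gives an element of $\cK_2$. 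By the CLT, it is not of the form $\psi\circ\tau_1-\psi$ for any measurable $\psi$.

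Hence $\psi_M=\max(-M,\min(M,\psi))$ is undefined. Truncating the lifted potential on $\ZZ^d$ instead destroys stationarity. It then yields no test function on $\Omega$, and the identity $\int\sum_k b_k\partial_k G_M(\psi)\,d\pi=0$ is lost. Steps 2--3 prove only a much weaker statement: no nonzero \emph{genuine function} $\psi$ with $\norm{\psi}_+<\infty$ solves the weak equation. Even this needs an extra truncation, since $G_M(\psi)$ is unbounded while $b_k$ is only in $\cL_1$. The real content of the proposition is to exclude kernel elements $u\in\cK_2$ that have no stationary potential, and your argument does not reach these.

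\textbf{The fallback is circular.} Suppose $R\nabla g_n\to u$ in $\cV_2$ with $g_n\in\cA$. Then the equation itself forces $\pm\sprod{u}{R^{-1}H\nabla g_n}\to-\norm{u}_2^2$. So controlling the antisymmetric term along such a sequence is equivalent to the proposition. Moreover, $\cV_2$-convergence gives no independent handle, because $R^{-1}HR^{-1}$ is unbounded on $\cV_2$.

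\textbf{The missing idea.} Stay entirely at the level of fields in $\cK_2$, and truncate the \emph{coefficients} rather than the unknown. The paper conjugates $B$ to $D=\Pi R^{-1}HR^{-1}\Pi$ on $\cD=\{R\nabla g: g\in\cA\}$. It then argues that $D^*$ is skew-symmetric by approximating $D^*w$, for $w\in\cD^*$, by $-\Pi(R^K)^{-1}H^K(R^K)^{-1}w$. The truncated operators are bounded and skew-self-adjoint on $\cV_2$, and they agree with $R^{-1}HR^{-1}$ on any fixed element of $\cD$ once $K$ is large. No potential and no stationarity issue ever arise. Be aware that this route also has a delicate point. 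To upgrade convergence tested against the dense set $\cD$ to a weak limit in $\cK_2$, one needs a bound on $\norm{\Pi(R^K)^{-1}H^K(R^K)^{-1}w}_2$ that is uniform in $K$.
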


\begin{proof}
Let 
\begin{align*}
&
\cD:=
\Lambda \cC
=
\{R \nabla g: g\in\cA\}
\subset \cK_2
= 
\cD^{{\tt cl}2}
\end{align*}
and define the operator $D:\cD\to \cK_2$ as $D:=
\Lambda B|_{\cC} \Lambda^*$. That is, 
\begin{align}
\label{op-D-def}
&
D:= 
\Pi R^{-1}H R^{-1} \Pi, 
&&
Du
:=
\underbrace{\Pi \underbrace{R^{-1}H\underbrace{R^{-1}\underbrace{u}_{\in\cD}}_{\in\nabla\cA}}_{\in\cV_{\infty}}}_{\in\cK_{2}},
\end{align}
where $\Lambda, \Lambda^*, \Pi$ are the Riesz operators and orthogonal projection defined in \eqref{Riesz ops}, \eqref{ortproj}. 

Since $\Lambda: \cL_2\to \cK_2$, $\Lambda^*:\cK_2\to\cL_2$ are \emph{isometries}, the proposition will follow from proving that the operator $D:\cD\to\cK_2$ is essentially skew-self-adjoint. This is what we are going to prove. 

The operator $D:\cD\to\cK_2$ is skew symmetric. 
Indeed, from the choice of the core $\cA$ is follows that $u\in\cD\subset \cK_\infty$  readily implies that
$R^{-1}u\in \nabla \cA \subset \cU_\infty$, 
$HR^{-1}u \in \cV_\infty$, 
$R^{-1}HR^{-1}u \in \cV_\infty$ also hold, and thus all steps of the following chain are legitimate (not merely formal)
\begin{align*}
\sprod{ u}{ R^{-1}HR^{-1} v}
=
\sprod{ R^{-1} u}{ H R^{-1} v}
=
-
\sprod{ H R^{-1} u}{R^{-1} v}
=
-
\sprod{ R^{-1}HR^{-1} u}{ v}
\end{align*}
Next we define the adjoint (over the Hilbert space $\cK_2$) of  $D: \cD\to \cK_2$. Its domain is 
\begin{align*}
\cD^{*}
:=
\{w\in\cK_2 : \exists c=c(w)<\infty : \forall u\in\cD: \abs{\sprod{w}{ R^{-1}HR^{-1} u}}\leq c \norm{u}_2 \},
\end{align*}
and $D^*:\cD^*\to \cK_2$ is defined uniquely by the Riesz Lemma: for any $w\in\cD^*$, $D^* w$ is the unique element of $\cK_2$ such that for all $u\in \cD$
\begin{align*}
\sprod{D^* w }{u} 
=
\sprod{w}{ R^{-1} H R^{-1} u}. 
\end{align*}
Obviously, $\cD\subset \cD^*$,  $D^*|_{\cD}=-D$, and 
\begin{align*}
D
\prec 
D^{**}
\preceq
-D^{*}
\end{align*}
In order to conclude 
\begin{align*}
D^{**}
=
-D^{*}
\end{align*}
and thus essential skew-self-adjointness of $D$, as defined in \eqref{op-D-def}, it is sufficient to prove that $D^*$ is skew-symmetric on $\cD^*$. 

For $K<\infty$, let 
\begin{align*}
&
r^{K}_{k}(\omega)
:=
r_{k}(\omega) \ind{K^{-1}\leq r_{k}(\omega)\leq K},
&&
h^{K}_{k,l}(\omega)
:=
h_{k,l}(\omega) \ind{\abs{h_{k,l}(\omega)}\leq K}.
\end{align*}
These truncated functions inherit the conductance symmetries \eqref{s b symm} and stream-tensor (anti)symmetries \eqref{h-tensor}. We define the \emph{bounded} operators $R^{K}, (R^{K})^{-1}, H^{K}: \cV_2\to\cV_2$ by the formulas \eqref{basic-ops}, \eqref{R-op}, \eqref{H-op}, with the functions  $r_k$, $h_{k,l}$ replaced by their truncated versions $r^{K}_k$, $h^{K}_{k,l}$. 

\begin{lemma}
\label{lem: weak-limit}
For $w\in\cD^*$ 
\begin{align}
\label{weak-lim}
D^*w 
=
-
\wlim_{K\to\infty}  \Pi \, (R^{K})^{-1} \,  H^{K} \,  (R^{K})^{-1} \, w,
\end{align}
where $\wlim$ denotes weak limit in the Hilbert space $(\cK_2, \norm{\cdot}_2)$.
\end{lemma}

\begin{proof}
This is straightforward. Let $w\in\cD^*$ and $u\in\cD$. Then 
\begin{align*}
-
\lim_{K\to\infty}
\sprod{  (R^{K})^{-1} \,  H^{K} \,  (R^{K})^{-1}  w}{u}
& 
=
\lim_{K\to\infty}
\sprod{w}{  (R^{K})^{-1} \,  H^{K} \,  (R^{K})^{-1}  u}
\\[10pt]
& 
=
\sprod{w}{  R^{-1} \,  H \,  R^{-1}   u}
=
\sprod{D^* w}{u}.
\end{align*}
The first step is legitimate, since for $K<\infty$, the operator $(R^{K})^{-1} \,  H^{K} \,  (R^{K})^{-1}$ is bounded and skew-self-adjoint. The second step follows from the fact that (due to the choice of the core $\cA$) for any $u = R \nabla g$, $g\in\cA$, there exists a $K_0<\infty$, such that for any $K>K_0$, $(R^{K})^{-1} \,  H^{K} \,  (R^{K})^{-1}  u = R^{-1} \,  H \,  R^{-1}   u$. 

Finally, since $\cK_2=\cD^{{\tt cl}2}$, \eqref{weak-lim} follows. 
\end{proof}

From \eqref{weak-lim} the skew-symmetry of the operator $D^*:\cD^*\to\cK_2$ drops out: for  $u,w\in\cD^*$
\begin{align*}
\sprod{w}{D^* u}
&
=
\lim_{K\to\infty}
\sprod{w}{ (R^{K})^{-1} \,  H^{K} \,  (R^{K})^{-1}  u}
\\[10pt]
&
=
-
\lim_{K\to\infty}
\sprod{ (R^{K})^{-1} \,  H^{K} \,  (R^{K})^{-1}  w}{ u}
=
-
\sprod{D^* w}{ u}.
\end{align*}
This concludes the proof of essential skew-self-adjointness of the operator $D:\cD\to \cK_2$. We also conclude essential skew-self-adjointness of the operator $B= \Lambda^* D \Lambda$ on the core $\cC=\Lambda^* \cD$. 
\end{proof}

This also concludes checking all conditions of Theorem \ref{thm: rsc} in the concrete setting and thus also the proof of Theorem \ref{thm: main}. 
\qed

\subsection{Bonus: Harmonic coordinates - existence and uniqueness}
\label{ss: Harmonic coordinates - existence and uniqueness}

\begin{proposition}
[Existence and uniqueness of harmonic coordinates]
\label{prop: harmonic coordinates}
\phantom{}
\\
Assume 
{\red \eqref{r-l2}}
and 
{\blue \eqref{rrec-l2}}. 
Given $\varphi\in \cH_{-}$ there exists a unique solution $w\in R^{-1}\cK_2 \buildrel {\blue \eqref{rrec-l2}} \over \subset \cU_1$ (that is: integrable gradient field) of the equation
\begin{align}
\label{hc-eq}
\sum_{k\in{\cN}}
\underbrace{(s_k(\omega)+b_k(\omega))}_{p_k(\omega)} w_k(\omega)
=
\varphi(\omega).
\end{align}
\end{proposition}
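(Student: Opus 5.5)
Write $w=R^{-1}u$ with $u\in\cK_2$. By \eqref{rrec-l2} and Cauchy--Schwarz, $w_k=r_k^{-1}u_k\in\cL_1$. Moreover $R^{-1}\cK_2\subset\cU_1$, because the gradient (cocycle) relations defining $\cU$ are preserved under $\cL_1$-limits: $u^{(n)}=R\nabla g_n\to u$ in $\cL_2$ gives $\nabla g_n\to w$ in $\cL_1$. The plan is to turn \eqref{hc-eq} into a linear equation in $\cK_2$ involving the skew-self-adjoint closure $\ol D=-D^*$ from the proof of Proposition \ref{prop: B-is-skew-self-adjoint}, and to solve it by invertibility of $I+\ol D$.

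\emph{Step 1: weak reformulation.} The identity \eqref{sym-id} holds pointwise for any $w$, so $\sum_k s_kw_k=\nabla^*R^2w=\nabla^*Ru$. The identity \eqref{skew-sym-id} holds pointwise for every $w\in\cU$, so $\sum_k b_kw_k=\nabla^*Hw$. Both terms lie in $\cL_1$, since $\abs{b_kw_k}\le s_k\abs{w_k}=r_k\abs{u_k}$. Now test \eqref{hc-eq} against $g\in\cA$. Using the skew-symmetry of $H$ on the bounded-support test vector $\nabla g$, and noting that $R^{-1}HR^{-1}(R\nabla g)\in\cV_\infty$ by the choice \eqref{Omega_K}, one gets, with $v=R\nabla g\in\cD$,
\begin{align*}
\sprod{\varphi}{g}
=\sprod{u}{v}-\sprod{u}{R^{-1}HR^{-1}v}
=\sprod{u}{v}-\sprod{u}{Dv}.
\end{align*}
By Proposition \ref{prop: Hpm particular}(iii),(iv), the left side equals $\sprod{\Lambda f}{v}$ with $f:=S^{-1/2}\varphi\in\cL_2$, i.e. $\Lambda f$ is the unique element of $\cK_2$ with $\nabla^*R\Lambda f=\varphi$. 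Hence \eqref{hc-eq} implies
\begin{align*}
\sprod{u}{Dv}=\sprod{u-\Lambda f}{v}\qquad\text{for all } v\in\cD,
\end{align*}
that is, $u\in\cD^*$ and $D^*u=u-\Lambda f$.

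Conversely, if $u\in\cD^*$ satisfies $D^*u=u-\Lambda f$, the same computation shows that the $\cL_1$ function $F:=\sum_kp_kw_k-\varphi$ satisfies $\sprod{F}{g}=0$ for all $g\in\cA$. Since the sets $\Omega_K$ exhaust $\Omega$, this forces $F=0$, so \eqref{hc-eq} holds. (Centredness of both sides is automatic: the left side of \eqref{hc-eq} has mean zero because $w$ is a mean-zero gradient field.) Thus \eqref{hc-eq} is equivalent to
\begin{align*}
(I+\ol D)\,u=\Lambda f,\qquad u\in\Dom(\ol D),
\end{align*}
using $\ol D=-D^*$ from Proposition \ref{prop: B-is-skew-self-adjoint}.

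\emph{Step 2: solve.} Since $\ol D$ is skew-self-adjoint on $\cK_2$, its spectrum lies on $i\RR$. Hence $I+\ol D$ is a bijection $\Dom(\ol D)\to\cK_2$ with $\norm{(I+\ol D)^{-1}}\le1$; one can see this directly from $\norm{(I+\ol D)u}^2=\norm{u}^2+\norm{\ol Du}^2$. Set $u:=(I+\ol D)^{-1}\Lambda f$ and $w:=R^{-1}u$; this gives existence. For uniqueness, if $(I+\ol D)u=0$ then $\norm{u}^2=-\sprod{u}{\ol Du}=0$.

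\emph{Main obstacle.} The delicate point is Step 1: legitimately applying \eqref{skew-sym-id} and the integration by parts $\sprod{\nabla^*Hw}{g}=-\sprod{w}{H\nabla g}$ to a merely integrable gradient $w$. The product $h_{k,l}w_l$ need not be integrable globally. I would handle this by doing every pairing against $g\in\cA$, where $h$ and $r^{\pm1}$ are bounded on the relevant (shifted) support. Equivalently, one can approximate by the truncations $R^K,H^K$ as in Lemma \ref{lem: weak-limit} and pass to the limit.
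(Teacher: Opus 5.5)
Your proof is correct and is essentially the paper's argument. The paper writes $w=R^{-1}\Lambda g$ and solves $(I+B)g=S^{-1/2}\varphi$ using $B=-B^*$; your equation $(I+\ol D)u=\Lambda f$ in $\cK_2$ is the same equation conjugated by the isometry $\Lambda$ (recall $D=\Lambda B\Lambda^*$ and $u=\Lambda g$). Your weak reformulation through $\cD^*$ is a real gain in rigor: it justifies what the paper does only formally, namely identifying the action of $R^{-1}HR^{-1}$ on a general $u\in\cK_2$ with the closure $\ol D=-D^*$, and in doing so it actually proves the uniqueness part of the claim.
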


\begin{proof}
We search for a solution  $w\in R^{-1}\cK_2\subset \cU_1$. Using \eqref{sym-id} and \eqref{skew-sym-id} we write the equation \eqref{hc-eq} as 
\begin{align*}
\nabla^*(R^2 + H) w = \varphi, 
\end{align*}
and 
\begin{align*}
w=R^{-1} \Lambda g, 
\qquad
g\in \cL_2.
\end{align*}
Writing the equation for $g\in\cL_2$ we obtain
\begin{align*}
S^{1/2}\Lambda^*(I + R^{-1}HR^{-1}) \Lambda g = \varphi. 
\end{align*}
Since it is assumed that $\varphi\in\cH_{-}$ we can multiply the equation from the left with $S^{-1/2}$ and get 
\begin{align*}
(
\underbrace{\Lambda^*\Lambda}_{I_{\cL_2}} 
+ 
\underbrace{\Lambda^* R^{-1}HR^{-1} \Lambda}_{B}
) g = S^{-1/2}\varphi 
\end{align*}
Finally, since $B=-B^*$ it follows that $I+B$ is invertible (over $\cL_2$) with 
\begin{align*}
\norm{(I+B)^{-1}}_{2\to2}\leq1, 
\end{align*}
yielding the solution 
\begin{align*}
w=
\underbrace{R^{-1}\, 
\underbrace{\Lambda \, 
\underbrace{\, (I+B)^{-1} \, 
\underbrace{\, S^{-1/2} \, 
\underbrace{\, \varphi}
_{\in\cH_-}}
_{\in \cL_2}}
_{\in \cL_2}}
_{\in \cK_2}}
_{\in R^{-1}\cK_2\subset \cU_1}\,,
\end{align*}
and hence
\begin{align*}
\norm{w}_1 
\leq \norm {r^{-1}}_2 \,\norm{\varphi}_{-}
\buildrel
{\blue \eqref{rrec-l2}}
\over < \infty.
\end{align*}
\end{proof}

\section*{Appendix: Helmholtz's Theorem}
\label{s: Appendix: Helmholtz}

The content of this Appendix sheds light on the problem of when and how can a divergence-free flow/vector field expressed in divergence form, as in \eqref{h-tensor}\&\eqref{b-is-curl-of-h}. As we \emph{assumed} that this is the case for the antisymmetric part of the jump rates in \eqref{jump rates p s b}, in a formal logical sense the mathematical content of the paper does not rely on Proposition \ref{prop: Helmholtz} below.

As noted in section \ref{ss:Preliminaries}, given a stationary and ergodic, divergence-free vector field/flow 
$b:\ZZ^d\times\Omega\to \RR^{\cN}$, as, e.g. that given in \eqref{jump rates p s b},  it is a difficult and subtle issue to decide whether it can be written in divergence form, cf \eqref{h-tensor}\&\eqref{b-is-curl-of-h} or not. (Except, of course,  if it is a priori given in this form.) 

The following statement sheds light on this problem. It is an ergodic variant of what is usually referred to as \emph{Helmholtz's Theorem} from classical electrodynamics:  "In $\RR^3$, a divergence-free vector field (e.g., a time-wise stationary magnetic field) is written as the curl/rotation of a vector potential." (To be taken with a grain of salt!)

\begin{proposition}
["Helmholtz's Theorem"]
\label{prop: Helmholtz}
\phantom{}

\begin{enumerate} [(i)]

\item 
Assume  $b\in\cL_1 $. 
There exists a stream tensor field $H: \ZZ^d\times\Omega \to \RR^{\cN\times\cN}$ with $\displaystyle H_{k,l}(x, \cdot)\in \cL_{1{\rm w}}
:=\{f\in\cL: \norm{f}_{1{\rm w}}:=\sup_{0<\lambda<\infty} \lambda \pi(\abs{f}>\lambda) <\infty \}$, 
\begin{align}
\label{H-stream}
H_{-k,l}(x+k,\omega)
\!=\!
H_{k,-l}(x+l,\omega)
\!=\!
H_{l,k}(x,\omega)
\!=\!
-H_{k,l}(x, \omega)
\end{align}
with stationary increments
\begin{align}
\label{H-stincr}
H_{k,l}(y,\omega)
-
H_{k,l}(x,\omega)
=
H_{k,l}(y-x,\tau_y\omega)
-
H_{k,l}(0,\tau_x\omega)
\black,
\end{align}
such that Helmholtz's relation holds
\begin{align}
\label{helmholtz}
b_k(x,\omega)
=
\sum_{l\in\cU} H_{k,l} (x,\omega)
=
\frac12
\sum_{l\in\cU} (H_{k,l} (x,\omega)-H_{k,l} (x-l,\omega))
=
\sum_{l\in\cU} H_{k,l} (0,\tau_x\omega)
\end{align}

\item 
If $b\in\cL_p$, $p\in(1,2]$ than the same hold with $H_{k,l}(x, \cdot)\in\cL_p$. 

\item 
Assume $b\in\cL_2$. 
There exists a \emph{stationary} stream field $h:\ZZ^d\times\Omega\to \RR^{cN\times\cN}$ with $h_{k,l}(x , \omega)=h_{k,l}(\tau_x\omega)$, $h_{k,l}\in \cL_2$ such that 
\begin{align}
\label{stationary stream}
H_{k,l}(x,\omega)
=
h_{k,l}(x,\omega)
-
h_{k,l}(0,\omega)
\end{align}
if and only if $b_k\in\cH_{-1}(\abs{\Delta}):=\{f\in\cL_2: \displaystyle \lim_{\lambda\searrow0} \sprod{f}{ (\lambda I-\Delta)^{-1}f}<\infty\}$. 
\end{enumerate}

\end{proposition}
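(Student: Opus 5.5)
The approach is to construct the stream tensor as the curl of a vector potential solving a regularized Poisson equation, and then pass to the limit in the increments. Let $\Delta:=-\nabla^*\nabla=\frac12\sum_{m\in\cN}\partial_m$ denote the lattice Laplacian acting on functions on $\Omega$ through the shifts. For $\lambda>0$ I set
\begin{align*}
u^\lambda_k:=(\lambda I-\Delta)^{-1}b_k,
\qquad
h^\lambda_{k,l}:=\tfrac12\big(\partial_{l}u^\lambda_k-\partial_{k}u^\lambda_l\big),
\end{align*}
with the discrete correction factors $(T_{\cdot}+I)/2$ inserted as in \eqref{H-op}, so that the symmetries \eqref{h-tensor} hold exactly. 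Since $(\lambda I-\Delta)^{-1}$ commutes with all $T_m$, the vector $u^\lambda$ inherits $u^\lambda\in\cV$ and $\nabla^*u^\lambda=(\lambda I-\Delta)^{-1}\nabla^*b=0$ from \eqref{b is divfree}.

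\textbf{Step 1 (regularized Helmholtz identity).} Summing over $l$, the $\partial_k$-term produces $\partial_k$ of the divergence of $u^\lambda$, which vanishes. The remaining term gives $\sum_l h^\lambda_{k,l}=-\Delta u^\lambda_k=b_k-\lambda u^\lambda_k$. This is a purely algebraic lattice computation; I will check the discrete bookkeeping but expect no difficulty.

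\textbf{Step 2 (the increment field).} Define $H^\lambda_{k,l}(x,\omega):=h^\lambda_{k,l}(\tau_x\omega)-h^\lambda_{k,l}(\omega)$. This field has stationary increments \eqref{H-stincr} by construction. Each increment is a telescoping sum of terms $\partial_m h^\lambda_{k,l}$, that is, of second-order Riesz-type operators $\partial_m\partial_l(\lambda I-\Delta)^{-1}$ applied to $b$. The key analytic input is therefore the following uniform-in-$\lambda$ bound.
\begin{itemize}
\item[$\circ$] The operators $\partial_m\partial_l(\lambda I-\Delta)^{-1}$ are bounded on $\cL_p$ for $1<p\le2$, and of weak type $(1,1)$, uniformly in $\lambda>0$.
\end{itemize}
On $\ZZ^d$ these are Calderón--Zygmund convolution operators whose symbols $(e^{ip_m}-1)(e^{ip_l}-1)/(\lambda+\hat\Delta(p))$ satisfy Mikhlin-type bounds uniformly in $\lambda$. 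I would transfer these bounds to the measure-preserving $\ZZ^d$-action on $(\Omega,\pi)$ via the Coifman--Weiss transference principle; weak $(1,1)$ also transfers, since the kernels are uniformly integrable at finite range.

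\textbf{Step 3 (limit $\lambda\to0$).} On $\cL_2$, the spectral theorem for the unitary $\ZZ^d$-action shows that $\partial_m\partial_l(\lambda I-\Delta)^{-1}b\to\partial_m\partial_l(-\Delta)^{-1}b$ strongly. Indeed the symbol converges pointwise and stays bounded by a constant, and $\lambda(\lambda I-\Delta)^{-1}b\to0$ because the shift action has no nontrivial invariant vectors among centred functions, by ergodicity. For $b\in\cL_p$ with $p\in(1,2]$, I combine this with the uniform bound from Step 2 and a density argument to obtain convergence in $\cL_p$, which proves (ii). For $b\in\cL_1$, I use uniform weak-$(1,1)$ together with $\cL_2$-convergence on a dense set to get convergence in measure, with the limit lying in $\cL_{1{\rm w}}$; this proves (i). Passing to the limit in Step 1, with $\lambda u^\lambda\to0$ in measure, gives \eqref{helmholtz}. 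The symmetries \eqref{H-stream} pass to the limit trivially.

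\textbf{Step 4, part (iii).} A stationary $h\in\cL_2$ with \eqref{stationary stream} exists iff $h^\lambda$ stays bounded in $\cL_2$ as $\lambda\to0$: the forward direction by weak compactness, the converse by projecting $H$ onto its stationary part. Now
\begin{align*}
\sum\norm{h^\lambda_{k,l}}_2^2\asymp\sum_k\sprod{u^\lambda_k}{-\Delta u^\lambda_k}\le\sum_k\sprod{b_k}{(\lambda I-\Delta)^{-1}b_k},
\end{align*}
which is bounded iff $b_k\in\cH_{-1}(\abs{\Delta})$. Monotone convergence in the spectral measure also gives the reverse inequality in the limit.

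\textbf{Main obstacle.} I expect the hard step to be the endpoint $p=1$ in part (i), specifically transferring the weak-type $(1,1)$ bound uniformly in $\lambda$ and obtaining convergence in measure. If transference proves awkward there, I would fall back on a direct argument: apply the Calderón--Zygmund decomposition along the lifted field restricted to large boxes, then average using the ergodic theorem.
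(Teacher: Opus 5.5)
Your analytic core is the route the paper only sketches. The increments of $H$ are second-order Riesz transforms $\partial_m\partial_l\abs{\Delta}^{-1}$ of $b$. The $\cL_1$ case rests on their weak $(1,1)$ bound on the ergodic system, and the $\cL_2$ facts are spectral, as in Proposition 11 of \cite{kozma-toth-17}. The differences are in tools only:
\begin{itemize}
\item where the paper adapts the Calder\'on--Zygmund decomposition to $(\Omega,\pi)$, you transfer the uniform-in-$\lambda$ Mikhlin/CZ bounds from $\ZZ^d$;
\item where the paper uses Marcinkiewicz interpolation for $1<p<2$, you use Mikhlin plus transference.
\end{itemize}
This works, provided you transfer the weak $(1,1)$ bound through finitely supported truncations of the kernels, whose CZ constants are uniform in both the truncation and $\lambda$.

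There is, however, a concrete error in Step 2. The normalization $H^\lambda_{k,l}(x,\omega)=h^\lambda_{k,l}(\tau_x\omega)-h^\lambda_{k,l}(\omega)$ destroys exactly the properties you later claim pass to the limit.
\begin{itemize}
\item It forces $H^\lambda(0,\cdot)\equiv0$, whereas \eqref{helmholtz} at $x=0$ requires $\sum_l H_{k,l}(0,\omega)=b_k(\omega)$. Indeed, letting $\lambda\to0$ in Step 1 gives $\sum_l H_{k,l}(x,\omega)=b_k(\tau_x\omega)-b_k(\omega)$, not $b_k(\tau_x\omega)$.
\item \eqref{H-stream} already fails for $\lambda>0$. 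By \eqref{h-tensor},
\[
H^\lambda_{-k,l}(x+k,\omega)+H^\lambda_{k,l}(x,\omega)=h^\lambda_{k,l}(\tau_{-k}\omega)-h^\lambda_{k,l}(\omega),
\]
and its limit is in general a nonzero second-order Riesz transform of $b$.
\end{itemize}
The cause is that you subtract a different constant for each label $(k,l)$, $(-k,l)$, $(k,-l)$ of one and the same plaquette.

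The repair is to subtract an $x$-independent constant that is itself a stream tensor. For $i<j$ and $\epsilon,\epsilon'\in\{\pm1\}$ put $c_{\epsilon e_i,\epsilon' e_j}(\omega):=\epsilon\epsilon'\,h^\lambda_{e_i,e_j}(\omega)$, $c_{l,k}:=-c_{k,l}$, $c_{k,\pm k}:=0$, and set $H^\lambda_{k,l}(x,\omega):=h^\lambda_{k,l}(\tau_x\omega)-c_{k,l}(\omega)$. Then:
\begin{itemize}
\item $c_{-k,l}=c_{k,-l}=c_{l,k}=-c_{k,l}$, so \eqref{H-stream} holds for $H^\lambda$.
\item $\sum_l c_{k,l}=0$, since the $l$ and $-l$ terms cancel. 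Hence $\sum_l H^\lambda_{k,l}(x,\omega)=(b_k-\lambda u^\lambda_k)(\tau_x\omega)=\sum_l H^\lambda_{k,l}(0,\tau_x\omega)$.
\item Each $H^\lambda_{\epsilon e_i,\epsilon' e_j}(x,\omega)=\epsilon\epsilon'\big(h^\lambda_{e_i,e_j}(\tau_{x'}\omega)-h^\lambda_{e_i,e_j}(\omega)\big)$, with $x'=x-\frac{1-\epsilon}{2}e_i-\frac{1-\epsilon'}{2}e_j$, is a telescoping sum of your Riesz transforms. So Steps 2--3 apply verbatim, and all identities survive the limit.
\end{itemize}

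Note that \eqref{stationary stream} as printed has the same defect: $H(0,\cdot)\equiv0$ is incompatible with \eqref{helmholtz} unless $b=0$. So (iii) must be read with $h_{k,l}(0,\omega)$ replaced by this orientation-consistent constant built from $h$. With that reading your Step 4 goes through. Its converse is in fact immediate: a stationary $h\in\cL_2$ gives $b_k=-\frac12\sum_l\partial_{-l}h_{k,l}$, hence $\sprod{b_k}{(\lambda I-\Delta)^{-1}b_k}\le C\sum_l\norm{h_{k,l}}_2^2$.

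A minor point on Step 1: no correction factors are needed. For $u\in\cV$ the plaquette circulation $h_{k,l}=\partial_k u_l-\partial_l u_k$ satisfies \eqref{h-tensor} exactly and gives $\sum_l h_{k,l}=-\sum_l\partial_l u_k$. So take $u^\lambda=(\lambda I-\sum_l\partial_l)^{-1}b$; your sign and your factor $\frac12$ in $\Delta$ need adjusting accordingly.
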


\noindent
As we \ul{assume} \eqref{h-tensor}\&\eqref{b-is-curl-of-h}) the mathematical content of this paper formally does not rely on
roposition \ref{prop: Helmholtz}. Therefore, rather than presenting its full proof we only give some hints. 

\begin{proof}
[Hints to some elements of the proof of Proposition \ref{prop: Helmholtz}]
\phantom{}

\smallskip
\noindent
The $\cL_2$ setting of part (ii) and part (iii) appear as Proposition 11 in \cite{kozma-toth-17}, with complete proofs.

\smallskip
\noindent
The $\cL_1$ setting of part (i) is more subtle. From an appropriate adaptation of the Calder\'on-Zygmund Decomposition (see e.g. \cite{stein-70}) to the ergodic context $(\Omega, \cF, \pi, \tau_x:\Omega\to\Omega: x\in\ZZ^d)$, it follows that for $f\in\cL_1$, $\norm{\abs{\Delta}^{-1}\partial_l\partial_k f}_{1{\rm w}}\leq C \norm{f}_1$, $k,l\in\cN$, where the constant $C$ depends only on the dimension $d$. Once this is established, the arguments from the proof of Proposition 11 in \cite{kozma-toth-17} yield the result. A similar (though, not identical) adaptation of the Calder\'on-Zygmund theory to a random conductance setting appears in \cite{biskup-salvi-wolff-14}.

\smallskip
\noindent
Given (i) in the $\cL_1\to\cL_{1{\rm w}}$ setting, and (ii) in the $\cL_2\to\cL_{2}$ setting, (ii) in the $\cL_p\to\cL_{p}$, $1<p<2$ setting follows by Marcinkiewicz interpolation, cf \cite{stein-70}. 

\end{proof}

\bigskip

\noindent
{\bf \large Acknowledgements:}
Discussions with Marek Biskup, Gady Kozma, Stefano Olla and Christophe Sabot about the content of this paper were always instructive and joyful. 
\\
This work was supported by the Hungarian National Research and Innovation Office (NKFIH) through the grants K-143468 and ADVANCED-152922.

\vskip2cm

\hbox{
\vbox{\hsize=15cm\noindent
{\sc B\'alint T\'oth}
\\
Alfr\'ed R\'enyi Institute of Mathematics, Budapest
\\
email: {\tt toth.balint@renyi.hu}
}
}

\end{document}